\newcommand{\field}[1]{\mathbb{#1}}
\newcommand{\A}{\field{A}}
\newcommand{\C}{\field{C}}
\newcommand{\G}{\field{G}}
\newcommand{\N}{\field{N}}
\newcommand{\R}{\field{R}}
\newcommand{\Z}{\field{Z}}
\newcommand{\krn}{{\rm ker}\,}
\newtheorem{theorem}{Theorem}[section]
\newtheorem{proposition}[theorem]{Proposition}
\newtheorem{lemma}[theorem]{Lemma}
\newtheorem{corollary}[theorem]{Corollary}
\newtheorem{definition}[theorem]{Definition}
\newtheorem{example}[theorem]{Example}
\newtheorem{question}[theorem]{Question}
\begin{document}

% Redefine "plain" pagestyle
\makeatletter	   % `@' is now a normal "letter' for LaTeX
\makeatother     % `@' is restored as a "non-letter" character

\title{Canonical Factorization of the Quotient Morphism for an Affine $\G_a$-Variety}
\author{Gene Freudenburg} 
\dedicatory{To Mikhail Zaidenberg, mentor and friend, on the occasion of his seventieth birthday}
%\institute{Gene Freudenburg \at Department of Mathematics, Western Michigan University, Kalamazoo, Michigan 49008 USA, \email{gene.freudenburg@wmich.edu}}
\date{\today} 
\subjclass[2010]{13B25, 14R10} 
\keywords{locally nilpotent derivation, $\G_a$-action, affine modification, affine algebraic geometry}
\maketitle

\pagestyle{headings}

 \begin{abstract} Working over a ground field of characteristic zero, 
 this paper studies the quotient morphism $\pi :X\to Y$ for an affine $\G_a$-variety $X$ with affine quotient $Y$. 
 It is shown that the degree modules associated to the $\G_a$-action give a uniquely determined sequence 
of dominant $\G_a$-equivariant morphisms, $X=X_r\to X_{r-1}\to\cdots\to X_1\to X_0=Y$, where 
 $X_i$ is an affine $\G_a$-variety and $X_{i+1}\to X_i$ is birational for each $i\ge 1$. This is the {\it canonical factorization} of $\pi$.
We give an algorithm for finding the degree modules associated to the given $\G_a$-action, and this yields the 
canonical factorization of the quotient morphism. The algorithm is applied to compute the canonical factorization for several examples, including the homogeneous $(2,5)$-action on $\A^3$. 
By a fundamental result of Kaliman and Zaidenberg, any birational morphism of affine varieties is an affine modification, and each mapping in these examples is presented
as a $\G_a$-equivariant affine modification. 
\end{abstract} 
 
 \section{Introduction}
 
We assume throughout that $k$ is a field of characteristic zero and $\G_a$ is the additive group of $k$. A $k$-affine $\G_a$-variety is an affine $k$-variety $X$ equipped with a
regular algebraic action of $\G_a$. The $\G_a$-actions on $X$ are in bijective correspondence with the locally nilpotent derivations 
$D$ of the coordinate ring $k[X]$, 
and if $Y$ is the categorical quotient of the given $\G_a$-action, then $k[Y]=\krn D$, the kernel of the derivation (see \cite{Freudenburg.06}). 
By a fundamental result of Winkelmann \cite{Winkelmann.03}, $Y$ is always quasi-affine.

Let $\pi :X\to Y$ be the quotient morphism for the given $\G_a$-action, and assume that $Y$ is affine. The main purpose of this paper is to define the canonical factorization of $\pi$ and to give methods for finding it. The canonical factorization gives critical information about the $\G_a$-action. In particular, the degree modules 
\[
\mathcal{F}_n=\krn D^{n+1}
\]
are used to define the canonical factorization, where $D$ is the locally nilpotent derivation of $k[X]$ induced by the $\G_a$-action. 
For each $n\ge 0$, $\mathcal{F}_n$ is a module over $\mathcal{F}_0=\krn D$, and $k[\mathcal{F}_N]=k[X]$ for some $N\ge 0$, since $k[X]$ is an affine ring. 
Consider the ascending chain of subalgebras:
\[
k[Y]=\mathcal{F}_0\subset k[\mathcal{F}_1]\subset\cdots\subset k[\mathcal{F}_N]=k[X]
\]
From this we obtain a uniquely determined sequence of integers $n_i$ with 
$k[\mathcal{F}_{n_i}]\varsubsetneqq k[\mathcal{F}_{n_{i+1}}]$.
If $X_i={\rm Spec}(k[\mathcal{F}_{n_i}])$ and $N=n_r$, then the corresponding sequence of morphisms
\[
X=X_r  \xrightarrow{\pi_{r-1}} X_{r-1}  \to\cdots\to X_2  \xrightarrow{\pi_1} X_1\xrightarrow{\pi_0} X_0=Y
\]
is the canonical factorization of $\pi$, and $r$ is the index of the action. {\it Lemma\,\ref{noether}} and {\it Lemma\,\ref{index-res}} show the following. 
\begin{enumerate}
\item For each $i$, $X_i$ is an affine $\G_a$-variety and the morphism $\pi_i$ is $\G_a$-equivariant.
\item The morphisms $\pi_1,...,\pi_{r-1}$ are birational.
\end{enumerate}

This factorization can be described in terms of affine modifications, which were introduced by Zariski as a tool in studying birational correspondences, and further developed by Davis
\cite{Davis.67,Zariski.43}. Geometrically,
an affine modification of the affine variety $X$ is a certain affine open subset $X'\subset X^*$, where $\beta :X^*\to X$ is a blow-up of $X$ (see {\it Sect.\,\ref{affine-mod}}). 
Kaliman and Zaidenberg were the first to systematically apply affine modifications to problems in affine algebraic geometry \cite{Kaliman.94, Kaliman.Zaidenberg.99}. They proved that any birational morphism of affine varieties is an affine modification (\cite{Kaliman.Zaidenberg.99} Thm.\,1.1). Moreover, in the case $X$ is a $\G_a$-variety, they give conditions as to when the action lifts to an affine modification $\beta :X'\to X$, i.e., $X'$ is a $\G_a$-variety and $\beta$ is $\G_a$-equivariant (\cite{Kaliman.Zaidenberg.99} Cor.\,2.3). This is called a $\G_a$-equivariant affine modification. Thus, each mapping in the canonical factorization for the quotient map of a $\G_a$-action is a $\G_a$-equivariant affine modification. 

The actions of $\G_a$ on affine spaces $X=\A^n$ are of particular interest. The $\G_a$-actions on the affine plane $X=\A^2$ were classified by Rentschler in 1968 
\cite{Rentschler.68}. This classification shows that the quotient morphism $\pi :X\to Y$ is of the form $X=Y\times\A^1$, where $\pi$ is projection on the first factor. 
Consequently, the index of any planar $\G_a$-action is one, whereas in dimension three, there are $\G_a$-actions of index greater than one. Although much 
is known about $\G_a$-actions on $\A^3$, their complete classification has not been achieved. 
It is known that, if $X\to\cdots\to X_1\to X_0=Y$ is the canonical factorization for a $\G_a$-action on $X=\A^3$, then $Y\cong\A^2$ (due to Miyanishi) and $X_1\cong\A^3$ (due to Bonnett and Daigle); see {\it Sect.\,\ref{dim-3}}. Thus, one is led to study affine threefolds $X_i$ of {\it sandwich type}, that is, those admitting birational $\G_a$-equivariant morphisms $\A^3\to X_i\to\A^3$. 

In {\it Sect.\,\ref{examples}}, our methods are applied to compute the canonical factorization for several examples, including actions on $\A^3$, where each of the birational maps in the factorization is presented as a $\G_a$-equivariant affine modification. It is hoped that canonical factorizations provide a new tool for making progress on the classification of $\G_a$-actions on $\A^3$.
Similarly, a great deal of work has been done on the classification of $\G_a$-surfaces (see, for example, \cite{Flenner.Zaidenberg.05}), and canonical factorizations should be of interest in this endeavor.

{\it Thm.\,\ref{main}} gives the theoretical basis for an algorithm to calculate the degree modules of a locally nilpotent derivation $D$ of a commutative $k$-domain $B$ in the case where
$\krn D$ is noetherian. As such, it provides a tool for calculating several related objects, which include the following.
\begin{enumerate}
\item We obtain a method to find the canonical factorization for the quotient morphism of a $\G_a$-action on an affine variety, assuming that the quotient is affine. 
\item The degree modules $\mathcal{F}_i$ determine the image ideals $D^i\mathcal{F}_i$, so the algorithm gives a way to find generators for these ideals. 
The plinth ideal $D\mathcal{F}_1$ is especially important.
\item The associated graded ring ${\rm Gr}_D(B)$ induced by the degree function of $D$ is determined by the degree modules $\mathcal{F}_n$, so the algorithm gives a way to find generators for this ring up to degree $n$ once $\mathcal{F}_n$ has been calculated. 
\item A basic problem of locally nilpotent derivations is to find generators for a given kernel $A\subset B$. In case $A$ itself admits a locally nilpotent derivation $\delta$, 
finding the degree modules for $\delta$ gives a generating set for $A$.
\end{enumerate}

\paragraph{\bf Preliminaries.} We assume throughout that $k$ is a field of characteristic zero. When working with varieties, we further assume that $k$ is algebraically closed. If $B$ is a commutative $k$-domain, then $B^{[n]}$ denotes the polynomial ring in $n$ variables over $B$. Given nonzero $f\in B$, $B_f$ denotes the localization $S^{-1}B$ for $S=\{ f^n\,\vert\, n\in\N\}$. 
If $X$ is an affine $k$-variety, then $k[X]$ is the coordinate ring of $X$. Given an ideal $I\subset k[X]$, $\mathcal{V}(I)\subset X$ is the zero set of $I$ in $X$. 
If $B=k[x_1,...,x_n]=k^{[n]}$, then for each $i$, $\partial_{x_i}$ denotes the partial derivative $\partial/\partial x_i$ of $B$ relative to the given system of coordinates. 
\medskip

\paragraph{\bf Acknowledgment.} The author gratefully acknowledges the assistance of Daniel Daigle and Shulim Kaliman, whose comments and advice led to several improvements in this paper. 

%%%%%%%%%%%%%%%%%%%%%%%%%%%%%%%%%%%%%%%%%%%%%%%%%%%%%%%%%%%%%%%

 \section{Locally Nilpotent Derivations and $\G_a$-Actions} 

Let $B$ be a commutative $k$-domain. 

A {\bf locally nilpotent derivation} of $B$ is a derivation $D:B\to B$ such that, for each $b\in B$, there exists $n\in\N$ (depending on $b$) such that $D^nb=0$. 
Let $\krn D$ denote the kernel of $D$. The set of locally nilpotent derivations of $B$ is denoted by ${\rm LND}(B)$. Note that any $D\in\text{LND}(B)$ is a $k$-derivation. 

The study of $\G_a$-actions on an irreducible affine $k$-variety $X$ is equivalent to the study of locally nilpotent derivations of the coordinate ring $k[X]$. 
In particular, the action induced by $D\in {\rm LND}(B)$ is given by the exponential map $\exp (tD)$, $t\in\G_a$, where the invariant ring $k[X]^{\G_a}$ equals $\krn D$. Conversely, every regular algebraic $\G_a$-action on $X$ is of this form. If $X$ is a $\G_a$-variety, then $X^{\G_a}$ denotes the set of fixed points of the $\G_a$-action on $X$, which is defined by the ideal $(DB)$ generated by the image of $D$. 

Assume that $D\in {\rm LND}(B)$ and $A=\krn D$. 
An ideal $I\subset B$ is an {\bf integral ideal} for $D$ if $DI\subset I$. 
A {\bf slice} for $D$ is any $s\in B$ such that $Ds=1$. Note that $D$ has a slice if and only if the mapping $D:B\to B$ is surjective.  In this case, the {\bf Dixmier map} 
$\pi_s :B\to A$ given by $\pi_s(b)=\sum_{i\ge 0}\frac{(-1)^i}{i!}D^ib\,s^i$ is a surjective homomorphism of $k$-algebras. 

A {\bf local slice} for $D$ is any $r\in B$ such that $D^2r=0$ but $Dr\ne 0$. 
If $f=Dr$, then:
\[
B_f=A_f[r]=A_f^{[1]}
\]
The degree function $\deg _r$ on $B_f$ restricts to $B$, and this restricted function is denoted by $\deg_D$. Any other local slice of $D$ determines the same degree function, and if $E\in {\rm LND}(B)$ and $\krn E=A$, then $D$ and $E$ determine the same degree function. So $\deg_D$ is completely determined by the subring $A$.

Since $A=\{ b\in B\,\vert\, \deg_Db\le 0\}$ and $\deg b\ge 0$ for $b\ne 0$, $A$ is {\bf factorially closed} in $B$, 
meaning that $a,b\in A$ whenever $a,b\in B$ and $ab\in A\setminus\{ 0\}$. 
It follows that:
\begin{equation}\label{krn-ideal}
gB\cap A=gA \quad {\rm for\,\, all}\quad g\in A
\end{equation}

\begin{proposition} {\bf (Generating Principle)} Let $r\in B$ be a local slice of $D$ with $f=Dr$. 
 Suppose that $S\subset B$ is a subalgebra satisfying:
 \begin{enumerate}
 \item[(i)] $A[r]\subset S\subset B$
 \item[(ii)] $fB\cap S=fS$
 \end{enumerate}
 Then $S=B$.
 \end{proposition}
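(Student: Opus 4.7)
The plan is to prove that an arbitrary $b\in B$ lies in $S$ by exploiting the identification $B_f=A_f[r]=A_f^{[1]}$ recalled just above the statement, and then descending on the power of $f$ needed to clear denominators.

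First, since $B\subset B_f=A_f[r]$, any $b\in B$ admits a relation
\[
f^Nb=\sum_{i=0}^m a_ir^i
\]
with $a_i\in A$ and $N\ge 0$: one writes $b$ as a polynomial in $r$ with coefficients in $A_f$, then clears the denominator by a suitable power of $f\in A$. In particular $f^Nb\in A[r]\subset S$ by hypothesis (i), and if $N=0$ then $b\in S$ is immediate.

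The heart of the argument is a descent step that reduces $N$ by one using hypothesis (ii). Assuming $N\ge 1$, note that $f^Nb=f\cdot f^{N-1}b\in fB$, while $f^Nb\in S$ is already in hand, so $f^Nb\in fB\cap S=fS$. Writing $f^Nb=fs$ for some $s\in S$ and cancelling $f$ in the domain $B$ (which is legitimate since $f=Dr\ne 0$ by the definition of a local slice) yields $f^{N-1}b=s\in S$. Iterating, as long as the current exponent of $f$ on $b$ remains at least one, the same argument peels off another factor of $f$, so after $N$ such reductions one concludes $b\in S$.

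The main obstacle is really just recognizing the mechanism: once one sees that condition (ii) is precisely the hypothesis that permits division by $f$ without leaving $S$, the argument becomes essentially formal. The only point worth flagging is that the coefficients $a_i$ in the representation of $f^Nb$ may genuinely be taken in $A$, not merely in $A_f$, after clearing the denominator; this is immediate from the definition of localization at the multiplicative set $\{f^n\mid n\in\N\}$ together with $f\in A$.
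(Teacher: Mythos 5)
Your proof is correct and takes essentially the same approach as the paper: both arguments first clear denominators via $B_f=A_f[r]$ to obtain $f^Nb\in A[r]\subset S$, and then use hypothesis (ii) together with the domain property to cancel the factor $f$ inside $S$. The only cosmetic difference is that you peel off one factor of $f$ at a time, whereas the paper observes that repeated application of (ii) gives $f^NB\cap S=f^NS$ and cancels $f^N$ in a single step.
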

 
 \begin{proof} Let $b\in B$ be given. Since $r$ is a local slice, there exists an integer $n\ge 0$ such that $f^nb\in A[r]$. By hypothesis (i), $f^nb\in S$. 
 Since $B$ is a domain, repeated application of hypothesis (ii) shows that 
 $f^nB\cap S=f^nS$. Therefore, $f^nb=f^ns$ for some $s\in S$. Since $B$ is a domain, it follows that $b=s$. 
 \end{proof}
 
 If $M\subset B$ is an $A$-submodule, then $M$ is {\bf factorially closed} in $B$ if $\alpha ,\beta\in M$ whenever $\alpha ,\beta\in B$ and $\alpha\beta\in M\setminus\{ 0\}$. 
 We also need the following.

 \begin{definition} Given a non-empty set $V\subset B$, $V$ is a {\bf $D$-set} if the restriction $\deg_D :V\to\N\cup\{-\infty\}$ is injective.
 Let $M\subset B$ be a free $A$-module. A {\bf $D$-basis} of $M$ is a basis which is a $D$-set.
\end{definition}

The following two lemmas are obvious but useful.
\begin{lemma} Let $V\subset B$ be a $D$-set.
\begin{itemize} 
\item [{\bf (a)}] The elements of $V$ are linearly independent over $A$.
\item [{\bf (b)}] If $b\in B$ and $\deg_Db>\deg_Dv$ for all $v\in V$, then $\cup_{i\ge 0}Vb^i$ is a $D$-set.
\end{itemize}
\end{lemma}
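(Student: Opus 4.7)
The plan is to use two standard properties of the degree function $\deg_D$, both of which follow from the identification $B_f = A_f[r] = A_f^{[1]}$: multiplicativity $\deg_D(xy) = \deg_D x + \deg_D y$, and the ultrametric inequality $\deg_D(x+y) \le \max(\deg_D x,\deg_D y)$, with equality whenever $\deg_D x \ne \deg_D y$. In addition, the characterization $A = \{b \in B : \deg_D b \le 0\}$ together with $\deg_D b \ge 0$ for every nonzero $b$ gives $\deg_D a = 0$ for every nonzero $a \in A$.

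For part (a), suppose for contradiction we had a nontrivial $A$-linear relation $\sum_{i} a_i v_i = 0$ with $a_i \in A$, $v_i \in V$, and not all $a_i$ zero. For each nonzero term, multiplicativity yields $\deg_D(a_i v_i) = \deg_D v_i$. Since $V$ is a $D$-set, the values $\deg_D v_i$ among the nonzero terms are pairwise distinct, so there is a unique term of strictly maximal degree. By the ultrametric equality, that degree is the degree of the sum, contradicting $\deg_D 0 = -\infty$.

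For part (b), I take two nonzero elements $v_1 b^{i_1}$ and $v_2 b^{i_2}$ of $\bigcup_{i \ge 0} V b^i$ with the same $\deg_D$-value and show they are equal. Multiplicativity gives $\deg_D v_1 + i_1 \deg_D b = \deg_D v_2 + i_2 \deg_D b$. If $i_1 \ne i_2$, say $i_1 < i_2$, then
\[
\deg_D v_1 - \deg_D v_2 = (i_2 - i_1)\deg_D b \ge \deg_D b > \deg_D v_1,
\]
which forces $\deg_D v_2 < 0$ and hence $v_2 = 0$, contrary to assumption. Therefore $i_1 = i_2$, and the resulting equality $\deg_D v_1 = \deg_D v_2$ forces $v_1 = v_2$ by the injectivity of $\deg_D$ on $V$.

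There is no serious obstacle here; the argument is a direct bookkeeping with the degree function. The only minor care is the handling of the $-\infty$ value, which is absorbed by restricting attention to the nonzero elements of the relevant sets.
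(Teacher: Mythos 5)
Your proof is correct. The paper offers no argument for this lemma (it is introduced as ``obvious but useful''), and your bookkeeping with the degree function --- multiplicativity, the ultrametric inequality with equality for distinct degrees, and $\deg_D a=0$ for nonzero $a\in A$, all read off from $B_f=A_f[r]$ --- is precisely the intended justification, so nothing further is needed.
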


\begin{lemma}\label{lemma2} Let $M\subset B$ be a finitely generated $A$-module, $M=\sum_{1\le i\le n}Aw_i$, and let $b\in B$ satisfy:
\[
\deg_Db>\max\{ \deg_Dw_i\, |\, 1\le i\le n\}
\]
Define the $A$-module $N=\sum_{i\ge 0}Mb^i$. 
\begin{enumerate}
\item [{\bf (a)}] If $M$ is a free $A$-module, then $N$ is a free $A$-module.
\item [{\bf (b)}] If $M$ admits a $D$--basis $\{ v_1,...,v_m\}$, then $N$ admits a $D$-basis given by:
\[
\{ v_ib^j\, |\, 1\le i\le m\, ,\, j\ge 0\}
\]
\end{enumerate}
\end{lemma}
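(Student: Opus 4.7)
Both parts rely on the key inequality $\deg_D w_i < \deg_D b$ together with multiplicativity of $\deg_D$ (which is valid because $\deg_D$ restricts from the degree $\deg_r$ on the polynomial ring $B_f = A_f[r]$). The spanning is automatic in both parts: by definition $N = \sum_{i\ge 0} M b^i$, and since $M = \sum_j A w_j$ we get $N = \sum_{i,j} A w_j b^i$ (and similarly for $v_j$ in place of $w_j$ in part (b)).

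\textbf{Part (b).} Given a $D$-basis $\{v_1,\dots,v_m\}$ of $M$, apply part (b) of the preceding lemma to the $D$-set $V=\{v_1,\dots,v_m\}$ with the given element $b$: since $\deg_D b > \deg_D v_i$ for all $i$, the collection $\{v_i b^j : 1\le i\le m,\, j\ge 0\}$ is a $D$-set. By part (a) of the preceding lemma this set is linearly independent over $A$, and as noted above it spans $N$, so it is an $A$-basis and a $D$-basis of $N$.

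\textbf{Part (a).} Pick an $A$-basis $\{w_1,\dots,w_n\}$ of $M$; we claim $\{w_j b^i\}$ is an $A$-basis of $N$. Spanning is clear. For independence, suppose $\sum_{i,j} a_{ij} w_j b^i = 0$ with $a_{ij}\in A$ almost all zero. Setting $m_i := \sum_j a_{ij} w_j \in M$, this becomes $\sum_i m_i b^i = 0$. Assume some $m_i\ne 0$ and let $k$ be the largest index with $m_k\ne 0$. Since $m_k\in M$ and $a_{kj}\in A = \krn D$, we have $\deg_D m_k \le \max_j \deg_D w_j < \deg_D b$, hence
\[
\deg_D(m_k b^k) \;=\; \deg_D m_k + k\deg_D b \;<\; (k+1)\deg_D b.
\]
However, for each $i<k$ with $m_i\ne 0$,
\[
\deg_D(m_i b^i) \;=\; \deg_D m_i + i\deg_D b \;<\; (i+1)\deg_D b \;\le\; k\deg_D b \;\le\; \deg_D(m_k b^k),
\]
so the equation $m_k b^k = -\sum_{i<k} m_i b^i$ forces $\deg_D(m_k b^k)$ to be strictly less than itself, a contradiction. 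Thus each $m_i = 0$, and freeness of $M$ gives $a_{ij}=0$ for all $i,j$.

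\textbf{Expected difficulty.} The argument is essentially a single-variable leading-coefficient computation, so no real obstacle arises once one notes that $\deg_D$ is multiplicative on $B$ and vanishes on nonzero elements of $A$; the only care needed is to separate the case where $M$ is merely free (where $w_j$'s may have equal degrees, so one must handle the $m_i$'s as a whole rather than term-by-term) from the $D$-basis case (where the sharper conclusion of part (b) follows directly from the preceding lemma).
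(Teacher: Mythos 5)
Your proof is correct: the paper states this lemma without proof (it is introduced as ``obvious but useful''), and your leading-degree argument --- part (b) via the preceding $D$-set lemma, part (a) by comparing $\deg_D(m_ib^i)$ against the top nonzero coefficient, using additivity of $\deg_D$ and $\deg_D=0$ on $A\setminus\{0\}$ --- is exactly the intended justification. The only cosmetic point is that in part (a) you reuse $w_j$ for a chosen basis of $M$ while the degree hypothesis is stated for the given generators; this is harmless since every element of $M$, in particular each basis element, has $\deg_D$ at most $\max_j \deg_D w_j < \deg_D b$, as you yourself note when bounding $\deg_D m_k$.
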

 
The reader is referred to \cite{Freudenburg.06} for further details about locally nilpotent derivations. 
 
 %%%%%%%%%%%%%%%%%%%%%%%%%%%%%%%%%%%%%%%%%%%%%%%%%%%%%%%%%%%%%%%%%%%%
 
 \section{Affine Modifications}\label{affine-mod}  
 
 In this section, we follow the notation and terminology of \cite{Kaliman.Zaidenberg.99}
 
An {\bf affine triple} over $k$ is of the form $(B,I,f)$, where $B$ is an affine $k$-domain, $I\subset B$ is a nonzero ideal and $f\in I$, $f\ne 0$.
For such a triple, define
\[
f^{-1}I=\{ g\in B_f\,\vert\, fg\in I\}
\]
where $B_f$ is the localization of $B$ at $f$. 
\begin{definition} Given the affine triple $(B,I,f)$, the ring $B[f^{-1}I]$ is the {\bf affine modification} of $B$ {\bf along} $f$ with {\bf center} $I$.
\end{definition}

Let $b_1,...,b_s\in B$ be such that $I=(b_1,...,b_s)$. Then $B[f^{-1}I]=B[b_1/f,...,b_s/f]$, and $B[f^{-1}I]$ is an affine domain. 
Let $X={\rm Spec}(B)$ and $X_{(I,f)}={\rm Spec}(B[f^{-1}I])$. 
\begin{definition} $X_{(I,f)}$ is the {\bf affine modification} of $X$ {\bf along} $f$ with {\bf center} $I$. 
The morphism $p:X_{(I,f)}\to X$ induced by the inclusion $B\subset B[f^{-1}I]$ is the {\bf associated morphism} for the affine modification. 
\end{definition}
Since $B_f= B[f^{-1}I]_f$, we see that
the associated morphism $p: X_{(I,f)}\to X$ is birational, and that the restriction of $p$ to the set $\{ f\ne 0\}$ is an isomorphism. 
The {\bf exceptional divisor} $E$ of $X_{(I,f)}$ is defined by the ideal $IB[f^{-1}I]$. 

Our main interest is in the following fact, due to Kaliman and Zaidenberg. 

\begin{theorem}\label{KZ1} {\rm (\cite{Kaliman.Zaidenberg.99}, Thm.\,1.1)} Any birational morphism of affine varieties is the associated morphism of an affine modification. 
\end{theorem}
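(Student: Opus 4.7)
The plan is to translate the statement into commutative algebra and exhibit an explicit affine triple. Let $\varphi : X' \to X$ be the given birational morphism, set $B = k[X]$ and $B' = k[X']$, and note that the comorphism $\varphi^*$ is injective (since $\varphi$ is dominant, being birational) and makes $\mathrm{Frac}(B) \to \mathrm{Frac}(B')$ an isomorphism. I may therefore regard $B \subset B'$ as subrings of a common field $K = \mathrm{Frac}(B)$. My aim is to produce a nonzero ideal $I \subset B$ with a nonzero element $f \in I$ such that $B[f^{-1}I] = B'$ as subrings of $B_f$; this will realize $\varphi$ as the morphism associated to the affine modification $X_{(I,f)} \to X$.

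To build the triple $(B,I,f)$, I would exploit the affineness of $X'$: the ring $B'$ is a finitely generated $k$-algebra, hence a finitely generated $B$-algebra, so $B' = B[c_1, \ldots, c_n]$ with each $c_i \in K$. Writing $c_i = a_i/d_i$ with $a_i, d_i \in B$ and $d_i \neq 0$, I clear denominators simultaneously by setting $f := d_1 \cdots d_n$ and $b_i := a_i \prod_{j \neq i} d_j$, so that $c_i = b_i/f$ for all $i$ and
\[
B' = B[b_1/f, \ldots, b_n/f].
\]
I then define $I := (b_1, \ldots, b_n, f) \subset B$, a nonzero ideal containing $f$, so that $(B, I, f)$ is a legitimate affine triple.

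The verification that $B[f^{-1}I] = B'$ is essentially a one-line check: each $b_i/f$ lies in $f^{-1}I$, giving the inclusion $\supseteq$, while conversely any $g \in f^{-1}I$ satisfies $fg = \sum_i \beta_i b_i + \beta f$ for some $\beta_i, \beta \in B$, so $g = \sum_i \beta_i (b_i/f) + \beta \in B'$. Because the inclusion $B \hookrightarrow B[f^{-1}I]$ agrees by construction with $\varphi^*$, the morphism $\varphi$ is the associated morphism of the modification. There is no serious obstacle in this argument; the only mildly subtle point is that $f$ must itself be adjoined to the generating set of $I$ in order to meet the defining condition $f \in I$ of an affine triple, and apart from that the whole proof rests on the standard common-denominator trick applied to finitely many generators of $B'$ inside $\mathrm{Frac}(B)$.
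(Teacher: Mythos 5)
Your argument is correct: with the paper's definition of an affine triple $(B,I,f)$ and of $B[f^{-1}I]$, writing $B'=B[b_1/f,\dots,b_n/f]$ by the common-denominator trick and taking $I=(b_1,\dots,b_n,f)$ does give $B[f^{-1}I]=B'$ with the associated morphism equal to $\varphi$, and your inclusion of $f$ among the generators correctly handles the requirement $f\in I$. Note that the paper itself offers no proof of this statement --- it is quoted from Kaliman--Zaidenberg (Thm.~1.1) --- and your proof is essentially the standard argument behind that cited result, so there is nothing to reconcile beyond the fact that you supplied the details the paper omits.
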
 

\subsection{Principal Affine Modifications}

\begin{definition} Let $(B,I,f)$ be an affine triple, and let $B'=B[f^{-1}I]$ be the induced affine modification. $B'$ is a {\bf principal} affine modification of $B$ if and only if $B'$ is a principal ring extension of $B$, that is, $B'=B[r]$ for some $r\in{\rm frac}(B)$. In this case, $X'={\rm Spec}(B')$ is a {\bf principal} affine modification of $X={\rm Spec}(B)$. 
\end{definition}

\begin{lemma}\label{principal} Let $(B,I,f)$ be an affine triple, and let $B'=B[f^{-1}I]$ be the induced affine modification. Then $B'$ is principal if and only if 
there exist $g\in B$ and $n\ge 0$ such that $B'=B[(f^n)^{-1}J]$, where $J=f^nB+gB$. 
\end{lemma}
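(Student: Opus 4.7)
The plan is a short argument in two directions, both driven by the observation that the affine modification $B'=B[f^{-1}I]$ sits inside the localization $B_f$. Writing $I=(b_1,\dots,b_s)$ and using the formula $B[f^{-1}I]=B[b_1/f,\dots,b_s/f]$ recalled just after the definition of affine modification, we get $B\subseteq B'\subseteq B_f$; hence every element of $B'$ can be written in the form $g/f^n$ for some $g\in B$ and $n\ge 0$. This single fact drives both implications.

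For the direction $(\Leftarrow)$, I would assume $B'=B[(f^n)^{-1}J]$ with $J=f^n B+gB=(f^n,g)$. Since $J$ is generated by the two elements $f^n$ and $g$, the same formula gives $B[(f^n)^{-1}J]=B[f^n/f^n,\,g/f^n]=B[g/f^n]$, so $B'$ is the principal extension of $B$ by $r=g/f^n\in\mathrm{frac}(B)$. For the direction $(\Rightarrow)$, I would assume $B'=B[r]$ with $r\in\mathrm{frac}(B)$. Then $r\in B'\subseteq B_f$, so $r=g/f^n$ for some $g\in B$ and $n\ge 0$; taking $J=f^n B+gB$ and invoking the computation from the previous direction yields $B[(f^n)^{-1}J]=B[g/f^n]=B[r]=B'$, as required. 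One should also note that $(B,J,f^n)$ is a legitimate affine triple because $f^n\in J$ is nonzero.

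The argument does not present a serious obstacle. The one step that has to be handled carefully is the inclusion $B'\subseteq B_f$, which is the crux of the whole equivalence and follows immediately from the explicit $B$-algebra generators of $B[f^{-1}I]$ when $I$ is finitely generated.
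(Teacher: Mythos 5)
Your proof is correct and follows essentially the same route as the paper: the forward direction is the identification $B[(f^n)^{-1}J]=B[g/f^n]$ for $J=(f^n,g)$, and the converse uses $r\in B'\subseteq B_f$ (the paper phrases this as "$f^nr\in B$ for some $n$, by definition") to set $g=f^nr$ and $J=f^nB+gB$. Your extra remark about $(B,J,f^n)$ being an affine triple and the explicit inclusion $B'\subseteq B_f$ are just minor elaborations of the same argument.
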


\begin{proof} Suppose that $g\in B$ and $n\ge 0$ are such that 
$B'=B[(f^n)^{-1}J]$ for the ideal $J=f^nB+gB$. Then $B'=B[\frac{g}{f^n}]$, which is principal.

Conversely, if $B'=B[r]$, then, by definition, there exists $n\ge 0$ such that $f^nr\in B$. Set $g=f^nr$ and $J=f^nB+gB$. Then $B[(f^n)^{-1}J]=B'$.
\end{proof}

\subsection{Composing Affine Modifications}

Let $(B,I,f)$ and $(B,J,g)$ be affine triples. Form the affine modifications:
\[
B_1=B[f^{-1}I]\,\, ,\,\, B_2=B[g^{-1}J] \,\, ,\,\, B'=[(fg)^{-1}IJ]
\]
Define $X_1={\rm Spec}(B_1)$, $X_2={\rm Spec}(B_2)$ and $X'={\rm Spec}(B')$. It is easy to check that:
\[
B'=B_1[g^{-1}JB_1]=B_2[f^{-1}IB_2]
\]
Therefore, the following diagram of affine modifications commutes.
\medskip
\begin{center}
\begin{tikzpicture}[scale=1.3]
\node at (0,1){$X'$};
\node at (1,0){$X_2$};
\node at (-1,0){$X_1$};
\node at (0,-1){$X$};
 \draw[thick,<-] (-.8,.2) -- (-.2,.8);
 \draw[thick,->] (.2,.8) -- (.8,.2);
 \draw[thick,->] (.8,-.2) -- (.2,-.8);
 \draw[thick,->] (-.8,-.2) -- (-.2,-.8);
\end{tikzpicture}
\end{center}

\subsection{Equivariant Affine Modifications}

Assume that $X$ is endowed with a $\G_a$-action. 
\begin{definition} The affine modification $X_{(I,f)}$ of $X$ is $\G_a$-{\bf equivariant} if the $\G_a$-action on $X$ lifts to $X_{(I,f)}$, i.e., $X_{(I,f)}$ admits a $\G_a$-action for which the associated morphism is $\G_a$-equivariant.
\end{definition}

Note that, if $X_{(I,f)}$ admits a $\G_a$-action for which the associated morphism is $\G_a$-equivariant, then the action is uniquely determined.

Suppose that $D\in {\rm LND}(B)$, $f\in\krn D$ is nonzero, and $I\subset B$ is a nonzero integral ideal for $D$ (i.e., $DI\subset I$). 
Let $D'$ be the extension of $D$ to $B_f$. Then $D'$ is locally nilpotent and:
\[
D'(f^{-1}I)=f^{-1}D'I\subset f^{-1}I
\]
It follows that $D'$ restricts (and $D$ extends) to $B[f^{-1}I]$. 
We have thus shown:

\begin{theorem}\label{KZ2} {\rm (\cite{Kaliman.Zaidenberg.99}, Cor.\,2.3)} Let $\rho :\G_a\times X\to X$ be a $\G_a$-action and let $X_{(I,f)}$ be an affine modification of $X$
along $f$ with center $I$. If $f\in k[X]^{\G_a}$ and $\rho$ restricts to an action on $\mathcal{V}(I)$, then $X_{(I,f)}$ is a $\G_a$-equivariant affine modification. 
\end{theorem}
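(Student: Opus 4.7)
My plan is to translate the hypotheses into the language of locally nilpotent derivations and then produce the lifted action by extending and restricting the derivation, following exactly the framework set up in the preceding section. Let $B=k[X]$ and let $D\in\mathrm{LND}(B)$ be the locally nilpotent derivation corresponding to $\rho$. The hypothesis $f\in k[X]^{\G_a}$ reads as $f\in\krn D$, and the hypothesis that $\rho$ restricts to an action on $\mathcal{V}(I)$ is equivalent to $D$ descending to $B/I$, i.e. $DI\subset I$, so that $I$ is an integral ideal for $D$ in the sense of the earlier definition.

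The first main step is to extend $D$ to the localization $B_f$. Because $f\in\krn D$, the quotient rule gives a well-defined derivation $D'$ on $B_f$ by the formula $D'(b/f^n)=(Db)/f^n$. An element of $B_f$ has the form $b/f^n$ with $b\in B$, and since $D$ is locally nilpotent on $B$ there exists $m$ with $D^mb=0$; then $(D')^m(b/f^n)=(D^mb)/f^n=0$, so $D'$ is locally nilpotent on $B_f$.

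The second main step is to show that $D'$ preserves the subring $B[f^{-1}I]$ of $B_f$. For this it suffices to check that $D'(f^{-1}I)\subset f^{-1}I$, because then $D'$ also preserves the $B$-algebra $B[f^{-1}I]$ generated by $B$ and $f^{-1}I$. Given $g\in f^{-1}I$, the condition $fg\in I$ combined with $Df=0$ yields
\[
f\cdot D'(g)=D'(fg)=D(fg)\in DI\subset I,
\]
so $D'g\in f^{-1}I$ as required. Thus $D'$ restricts to a locally nilpotent derivation of $B[f^{-1}I]$ that extends $D$.

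Finally, this restricted derivation corresponds to a regular $\G_a$-action on $X_{(I,f)}=\mathrm{Spec}(B[f^{-1}I])$, and the inclusion $B\hookrightarrow B[f^{-1}I]$ is by construction equivariant with respect to $D$ and its extension, so the associated morphism $p:X_{(I,f)}\to X$ is $\G_a$-equivariant. This is the definition of a $\G_a$-equivariant affine modification. I do not anticipate a real obstacle here; the only technical point worth checking carefully is that the extension of $D$ to $B_f$ remains locally nilpotent, and this is exactly where the assumption $f\in\krn D$ is used.
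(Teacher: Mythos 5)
Your proposal is correct and follows essentially the same route as the paper: translate the hypotheses into $f\in\krn D$ and $DI\subset I$, extend $D$ to a locally nilpotent derivation $D'$ of $B_f$, verify $D'(f^{-1}I)\subset f^{-1}I$, and conclude that $D'$ restricts to $B[f^{-1}I]$, giving the equivariant lift. The extra details you supply (the explicit check of local nilpotency on $B_f$ and the computation $fD'(g)=D'(fg)\in I$) are just an expansion of the paper's one-line verification $D'(f^{-1}I)=f^{-1}D'I\subset f^{-1}I$.
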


\begin{example} {\rm 

Consider the affine plane $X=\A^2$ and its coordinate ring $B=k[x,y]=k^{[2]}$. Let $L,M\subset X$ be the lines defined by $x=0$ and $y=0$, respectively, and let $P$ be their intersection. The locally nilpotent derivation $D=x\frac{\partial}{\partial y}$ of $B$ induces a $\G_a$-action on $X$, and $L=X^{\G_a}$. The one-dimensional orbits are lines $x=x_0$ for $x_0\ne 0$. 

Let $\beta : X^*\to X$ be the blow-up of $X$ at $P$, let $E\subset X^*$ be the exceptional divisor over $P$, and let $L^*,M^*\subset X^*$ be the strict transforms of $L$ and $M$, respectively. 
Let $U\subset X^*$ be the open set $U=X^*\setminus L^*$ and let $E_0=E\cap U$. 
Then $U\cong \A^2$, and the composition $U\hookrightarrow X^*\xrightarrow{\beta} X$ is a birational endomorphism of $\A^2$. 

As an affine modification, the coordinate ring $k[U]$ is $B[x^{-1}I]=k[x,\frac{y}{x}]$, where $I\subset B$ is the ideal $I=xB+yB$ defining $P$. Note that $DI\subset I$. 
The morphism $U\to X$ is defined by the inclusion 
$k[x,y]\hookrightarrow k[x,\frac{y}{x}]$, with exceptional divisor $E_0$. $D$ extends to the derivation $D'$ on $k[x,\frac{y}{x}]$ defined by $D'x=0$, $D'(\frac{y}{x})=1$.
Thus, the $\G_a$-action on $X$ lifts to a free $\G_a$-action on $U$, and the associated morphism $U\to X$ is $\G_a$-equivariant.

The situation is depicted in {\it Fig.\,1}. }
\end{example}

\begin{figure}[b]\label{Fig1}

\begin{tikzpicture}[scale=1.3]
    \draw[thick] (-1,0) -- node[left]{$L=X^{\G_a}\,\,$} (.2,1.2) (1,0) -- node[right]{$\,\,\, M$} (-.2,1.2);
    \filldraw (0,1) circle (1.5pt);
     \node[below] at (0,-.2){$X\cong\A^2$};
    \draw[thick,<-] (1.4,.5) -- (2.1,.5);
    \node[above] at (1.75,.5){$\beta$};
    \draw[thick] (3,0) -- node[left]{$L^*$} (3,1.2) (5,0) -- node[right]{$M^*$} (5,1.2) (2.8,1) -- node[above]{$E$} (5.2,1);
 
    \node[below] at (4,-.2){$X^*$};
     \draw[thick,left hook->] (6.5,.5) -- (5.8,.5);
     \draw[thick] (9,0) -- node[right]{$M^*$}  (9,1.2) (6.8,1) -- node[above]{$E_0$} (9.2,1);
     \draw[thick,dashed] (7,0) -- (7,1.2);
   
    \node[below] at (8,-.2){$U\cong\A^2$};
    \node[above] at (0,1.2){$P$};
\end{tikzpicture}

\caption{A birational $\G_a$-endomorphism of $\A^2$}
\end{figure}
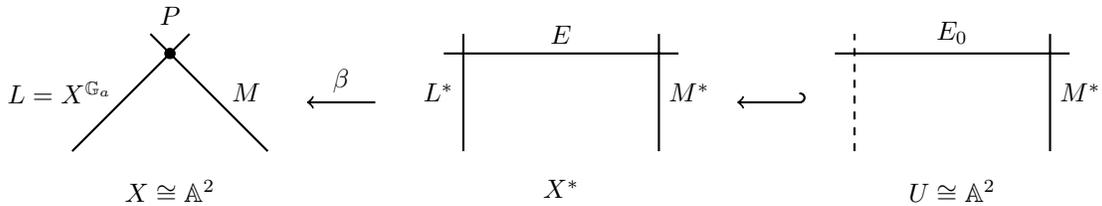
 
 %%%%%%%%%%%%%%%%%%%%%%%%%%%%%%%%%%%%%%%%%%%%%%%%%%%%%%%%%
 
 \section{Degree Modules}\label{algorithm}
 
 Assume that $B$ is a commutative $k$-domain and that $A\subset B$ is a subalgebra such that $A=\krn D$ for some nonzero $D\in {\rm LND}(B)$. Our primary interest is in the following three related objects. 
\medskip
 
 \begin{enumerate}
 \item  The ascending $\N$-filtration of $B$ by $A$-modules given by:
\[
B=\bigcup_{n\ge 0}\mathcal{F}_n\quad {\rm where}\quad \mathcal{F}_n=\krn D^{n+1}=\{ f\in B\, |\, \deg_Df\le n\}
\] 
The modules $\mathcal{F}_n$ are the {\bf degree modules} associated to $D$. 
\medskip
\item The descending $\N$-filtration of $A$ by ideals given by:
\[
A=I_0\supset I_1\supset I_2\supset\cdots \quad{\rm where}\quad I_n=A\cap D^nB=D^n\mathcal{F}_n
\]
The ideals $I_n$ are the {\bf image ideals} associated to $D$. The  {\bf plinth ideal} for $D$ is ${\rm pl}(D)=I_1$.
\medskip
\item The ring
\[
{\rm Gr}_D(B) = \bigoplus_{n\ge 0}I_n\cdot t^n \subset A[t]\cong A^{[1]}
\]
is the {\bf associated graded ring} defined by $D$. 
\end{enumerate}
\medskip

Observe the following.
 \begin{itemize}
  \item [(a)] Each $\mathcal{F}_n$ is a factorially closed $A$-submodule of $B$.
 \item [(b)] The definition of $\mathcal{F}_n$ depends only on $A$, not on the particular derivation $D$. 
 \item [(c)] Given integers $n,i$ with $1\le i\le n$, the following sequence of $A$-modules is exact.
\[
0\to \mathcal{F}_{i-1}\hookrightarrow \mathcal{F}_n \xrightarrow{D^i} D^i\mathcal{F}_n\to 0
\]
In particular, $I_n\cong \mathcal{F}_n/\mathcal{F}_{n-1}$.
\item [(d)] Let $r\in B$ be a local slice for $D$. For each $n\ge 0$, define the submodule $\mathcal{G}_n(r)\subset\mathcal{F}_n$ by:
\[
\mathcal{G}_n(r)=A[r]\cap\mathcal{F}_n = A\oplus Ar\oplus \cdots \oplus Ar^n
\]
If $r$ is a slice for $D$, then $\mathcal{G}_n(r)=\mathcal{F}_n$ for each $n\ge 0$. 
\end{itemize}

\begin{lemma}\label{noether} If $A$ is a noetherian ring, then $\mathcal{F}_n$ is a noetherian $A$-module for each $n\ge 0$.
\end{lemma}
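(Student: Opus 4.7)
The plan is to proceed by induction on $n$, leveraging the short exact sequence already recorded in observation (c) and the standard fact that noetherianness of modules is closed under extensions.

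For the base case $n = 0$, we have $\mathcal{F}_0 = A$, which is noetherian as a module over itself by hypothesis. For the inductive step, assume $\mathcal{F}_{n-1}$ is a noetherian $A$-module, and consider the short exact sequence of $A$-modules from observation (c), taking $i = n$:
\[
0 \to \mathcal{F}_{n-1} \hookrightarrow \mathcal{F}_n \xrightarrow{D^n} I_n \to 0,
\]
where $I_n = D^n \mathcal{F}_n$ is the $n$-th image ideal.

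The left term is noetherian by the inductive hypothesis. For the right term, $I_n$ is an ideal of $A$, so since $A$ is a noetherian ring, $I_n$ is finitely generated as an $A$-module, and every finitely generated module over a noetherian ring is noetherian. Applied to this sequence, the standard lemma that the middle term of a short exact sequence of modules is noetherian whenever both outer terms are noetherian now yields that $\mathcal{F}_n$ is a noetherian $A$-module, completing the induction.

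There is no real obstacle here: the essential content is packaged in observation (c), which provides the exact sequence identifying the quotient $\mathcal{F}_n / \mathcal{F}_{n-1}$ with the ideal $I_n \subset A$. Once that identification is in hand, the result reduces to the elementary extension-closure property of noetherian modules.
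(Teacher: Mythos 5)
Your proof is correct and follows essentially the same route as the paper: induction on $n$, the exact sequence of observation (c), and closure of noetherianness under extensions. The only (immaterial) difference is that you specialize to $i=n$ so the quotient is the ideal $I_n\subset A$, whereas the paper phrases the same argument with general $i$, viewing $D^i\mathcal{F}_n$ as a submodule of the noetherian module $\mathcal{F}_{n-i}$.
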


\begin{proof} $\mathcal{F}_0=A$ is noetherian by hypothesis. 
Given $n\ge 1$, assume by induction that $\mathcal{F}_m$ is noetherian for $0\le m\le n-1$. 
By the inductive hypothesis, $\mathcal{F}_{i-1}$ and $\mathcal{F}_{n-i}$ are noetherian for $1\le i\le n$. Therefore, 
the submodule $D^i\mathcal{F}_n$ of $\mathcal{F}_{n-i}$ is also noetherian. Since the sequence (3) above is exact, and 
since $\mathcal{F}_{i-1}$ and $D^i\mathcal{F}_n$ are noetherian, it follows that $\mathcal{F}_n$ is noetherian. 
\end{proof}

\begin{example} {\rm Assume that $A$ is a noetherian ring. Then there exist an integer $m\ge 1$ and $a_1,...,a_m\in A$ such that ${\rm pl}(D)=a_1A+\cdots +a_mA$. 
Let $r_1,...,r_m\in\mathcal{F}_1$ be such that $Dr_i=a_i$ for $1\le i\le m$. Given $s\in\mathcal{F}_1$, write $Ds=c_1a_1+\cdots +c_ma_m$ for $c_i\in A$. Then 
$s-(c_1r_1+\cdots c_mr_m)\in A$. It follows that:
\[
\mathcal{F}_1=A+Ar_1+\cdots +Ar_m
\]
See \cite{Alhajjar.15}, Lemma 2.2.}
\end{example}

Fix a local slice $r\in B$ and integer $n\ge 1$, and set $f=Dr\in A$. Suppose that $M_0$ is an $A$-submodule of $B$ such that $\mathcal{G}_n(r)\subset M_0\subset\mathcal{F}_n$. 
Inductively, define the ascending chain 
 $M_0\subset M_1\subset M_2\subset \cdots$ of $A$-submodules of $B$ by:
 \[
 M_i=\{ h\in B\,\vert\, fh\in M_{i-1}\} = \{ h\in B\,\vert\, f^ih\in M_0\} \quad (i\ge 1)
 \]
 Then $fM_{i+1}\subset M_i\subset M_{i+1}\subset \mathcal{F}_n$ for each $i\ge 0$, since $\mathcal{F}_n$ is factorially closed. 
 \begin{lemma}\label{filter} $\mathcal{F}_n=\cup_{i\ge 0}M_i$
 \end{lemma}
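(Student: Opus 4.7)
The plan is to establish the two containments separately. The inclusion $\bigcup_{i\ge 0} M_i \subset \mathcal{F}_n$ comes for free from the construction: we have $M_0 \subset \mathcal{F}_n$ by hypothesis, and the inductive step $M_i \subset \mathcal{F}_n$ was already noted in the paragraph preceding the lemma, using the factorial closure of $\mathcal{F}_n$ (if $f \cdot h \in \mathcal{F}_n$ and $f \ne 0$, then $h \in \mathcal{F}_n$ since $f \in A \subset \mathcal{F}_n$ and $\mathcal{F}_n$ is factorially closed in $B$).

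For the nontrivial containment $\mathcal{F}_n \subset \bigcup_{i\ge 0} M_i$, the key input is the defining property of a local slice, namely $B_f = A_f[r] = A_f^{[1]}$. This tells us that for any $b \in B$ there is some integer $i \ge 0$ with $f^i b \in A[r]$. I would pick an arbitrary $b \in \mathcal{F}_n$ and apply this to produce such an $i$. Since $f \in A$ and $\mathcal{F}_n$ is an $A$-module, multiplication by $f^i$ keeps us inside $\mathcal{F}_n$, so $f^i b \in A[r] \cap \mathcal{F}_n$.

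The next step is to identify $A[r] \cap \mathcal{F}_n$ with $\mathcal{G}_n(r)$. This uses that $r$ is transcendental over $A$ (since $B_f = A_f[r]$ is a polynomial ring in $r$), that $\deg_D r = 1$, and that $\deg_D$ extends the $r$-degree on $A[r]$. Consequently any element of $A[r]$ lying in $\mathcal{F}_n$ has $r$-degree at most $n$, i.e., lies in $A \oplus Ar \oplus \cdots \oplus Ar^n = \mathcal{G}_n(r)$. Since $\mathcal{G}_n(r) \subset M_0$ by assumption, we conclude $f^i b \in M_0$, which by the definition of the chain gives $b \in M_i$, completing the proof.

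There is no real obstacle here; the argument is essentially a repackaging of the localization identity $B_f = A_f[r]$, combined with the factorial closure of $\mathcal{F}_n$ and the agreement between $\deg_D$ and $\deg_r$ on $A[r]$. The only point that needs to be stated with care is the identification $A[r] \cap \mathcal{F}_n = \mathcal{G}_n(r)$, which relies on the transcendence of $r$ over $A$ and on $\deg_D r = 1$; both are immediate from the local slice hypothesis.
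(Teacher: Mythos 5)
Your argument is correct and is essentially the paper's own proof: take $b\in\mathcal{F}_n$, use $B_f=A_f[r]$ to get $f^ib\in A[r]\cap\mathcal{F}_n=\mathcal{G}_n(r)\subset M_0$, and conclude $b\in M_i$ from the definition of the chain, with the reverse inclusion already noted before the lemma. Your extra care in justifying $A[r]\cap\mathcal{F}_n=\mathcal{G}_n(r)$ is fine but is really just the paper's definition of $\mathcal{G}_n(r)$ in observation (d).
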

 
 \begin{proof}
It must be shown that, to each $h\in \mathcal{F}_n$, there exists $s\ge 0$ such that $h\in M_s$. Let $s\ge 0$ be such that $f^sh\in A[r]$.
Then $f^sh\in A[r]\cap\mathcal{F}_n=\mathcal{G}_n(r)\subset M_0$. 
By definition of the modules $M_i$, we see that $h\in M_s$, and the lemma is proved. 
\end{proof}
 
 \begin{theorem}\label{main} The following conditions are equivalent. 
 \begin{enumerate}
 \item  $fB\cap M_s=fM_s$ for some $s\ge 0$. 
 \item  $M_s=M_{s+1}$ for some $s\ge 0$.
 \item The ascending chain $M_0\subset M_1\subset M_2\subset\cdots $ stabilizes.
 \item $\mathcal{F}_n=M_s$ for some $s\ge 0$.
 \end{enumerate}
 If $A$ is a noetherian ring, then these conditions are valid. 
 \end{theorem}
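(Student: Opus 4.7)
The plan is to prove the cycle $(1) \Leftrightarrow (2) \Leftrightarrow (3) \Leftrightarrow (4)$ through a sequence of short direct arguments, and then to obtain condition $(3)$ in the noetherian setting from Lemma~\ref{noether}. The starting observation is that the inclusion $fM_s \subset fB \cap M_s$ is automatic, since $M_s$ is an $A$-submodule and $f \in A$. So $(1)$ reduces to the reverse inclusion, which asserts that whenever $fh \in M_s$ with $h \in B$ one has $h \in M_s$. By the very definition of the chain, $fh \in M_s$ is equivalent to $f^{s+1} h \in M_0$, i.e., $h \in M_{s+1}$. Combined with the automatic inclusion $M_s \subset M_{s+1}$, this yields $(1) \Leftrightarrow (2)$.

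For $(2) \Rightarrow (3)$ I would argue by induction on the step. Suppose $M_s = M_{s+1}$ and let $h \in M_{s+2}$, so that $f^{s+2} h \in M_0$. Rewriting this as $f^{s+1}(fh) \in M_0$ gives $fh \in M_{s+1} = M_s$, hence $f^{s+1} h = f^s(fh) \in M_0$, so $h \in M_{s+1}$. Thus $M_{s+1} = M_{s+2}$, and iterating shows the chain is constant from index $s$ onward; the converse $(3) \Rightarrow (2)$ is trivial. For $(2) \Rightarrow (4)$, once the chain stabilizes at step $s$, Lemma~\ref{filter} yields $\mathcal{F}_n = \bigcup_i M_i = M_s$, while $(4) \Rightarrow (2)$ follows from $M_{s+1} \subset \mathcal{F}_n = M_s \subset M_{s+1}$.

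Finally, if $A$ is noetherian, then Lemma~\ref{noether} makes $\mathcal{F}_n$ a noetherian $A$-module. Since every $M_i$ lies in $\mathcal{F}_n$, the ascending chain $M_0 \subset M_1 \subset \cdots$ must stabilize, establishing $(3)$. No step in this plan presents a real obstacle; the only subtlety is the inductive step $M_s = M_{s+1} \Rightarrow M_{s+1} = M_{s+2}$, which one must carry out through the defining relation $f^i h \in M_0$ rather than by attempting to cancel $f$ directly in $B$.
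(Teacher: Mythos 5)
Your proof is correct and follows essentially the same route as the paper: the equivalences $(1)\Leftrightarrow(2)$, $(2)\Rightarrow(3)$, $(3)\Rightarrow(4)$ via Lemma~\ref{filter}, and $(4)\Rightarrow(2)$ are argued exactly as in the text. The only (harmless) difference is the noetherian step: you apply the ascending chain condition to the submodules $M_i\subset\mathcal{F}_n$ of the noetherian module $\mathcal{F}_n$ to obtain (3), whereas the paper takes a finite generating set of $\mathcal{F}_n$ and uses Lemma~\ref{filter} to place it inside some $M_s$, giving (4) directly; both rest on Lemma~\ref{noether} and both are valid.
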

 
 \begin{proof} 
(1) $\Leftrightarrow$ (2): This follows by definition of the modules $M_i$.

(2) $\Rightarrow$ (3): Assume that $M_s=M_{s+1}$ for some $s\ge 0$. If $h\in M_{s+2}$, then:
\[
fh\in M_{s+1}=M_s \quad\Rightarrow\quad h\in M_{s+1}=M_s
\]
Therefore, $M_s=M_{s+2}$. By induction, we obtain that $M_s=M_S$ for all $S\ge s$. 
 
(3) $\Rightarrow$ (4): Assume that, for some $s\ge 0$, $M_s=M_S$ for all $S\ge s$. By {\it Lemma\,\ref{filter}}, it follows that $\mathcal{F}_n=\cup_{i\ge 0}M_i = M_s$. 

(4) $\Rightarrow$ (2):  Assume that $\mathcal{F}_n=M_s$ for some $s\ge 0$. Then $M_{s+1}\subset \mathcal{F}_n=M_s\subset M_{s+1}$ implies that $M_s=M_{s+1}$.

We have thus shown that conditions (1)-(4) are equivalent. Assume that $A$ is a noetherian ring.
By {\it Lemma\,\ref{noether}}, there exists a finite module basis $\{ z_1,...,z_t\}$ for $\mathcal{F}_n$, where $t\ge 1$. 
By {\it Lemma\,\ref{filter}}, there exists $s\ge 0$ such that $\{ z_1,...,z_t\}\subset M_s$. Therefore, $\mathcal{F}_n=M_s$, and condition (4) is validated. 
\end{proof}

{\it Theorem\,\ref{main}} gives the theoretical basis for an algorithm to calculate the degree modules $\mathcal{F}_n$ in the case where $A$ is noetherian. 
Suppose that $\{ x_1,...,x_m\}$ is a set of module generators for $M_i$ and let $\{X_1,...,X_m\}$ be a basis for the free $A$-module of rank $m$. 
Define $\rho :A^m\to M_i$ by $\rho (X_j)=x_j$. 
Then $K:=\rho^{-1}(fB\cap M_i)$ is a submodule of $A^m$. Since $A$ is noetherian, $K$ is finitely generated. Generators for $K$ can be calculated by standard methods.
Suppose that 
$\{ Y_1,...,Y_l\}$ is a set of generators for $K$, and let $s_1,...,s_l\in B$ be such that $\rho (Y_j)=fs_j$. Then $M_{i+1}=M_i+As_1+\cdots +As_l$.

%%%%%%%%%%%%%%%%%%%%%%%%%%%%%%%%%%%%%%%%%%%%%%%%%%%%%%%%%%%%%%%%%%%%%%%%%%

\section{Degree Resolutions}

We continue the notation and assumptions of the preceding section. 

\subsection{The Subrings $k[\mathcal{F}_n]$}

Define subrings $B_i=k[\mathcal{F}_i]\subset B$, $i\ge 0$. Then $B_0=A$ and $B_i\subset B_{i+1}$ for $i\ge 0$.
If $B$ is $G$-graded by an abelian group $G$ and $A$ is a $G$-graded subalgebra, then each $\mathcal{F}_i$ is a $G$-graded submodule and each $B_i$ is a $G$-graded subalgebra.

\begin{lemma}\label{index-res} Let $i$ be an integer, $i\ge 0$.
\begin{itemize} 
\item [{\bf (a)}] $D$ restricts to $D_i:B_i\to B_i$, where $A=\krn D_i$
\item [{\bf (b)}] If $i\ge 1$, then ${\rm frac}(B_i)={\rm frac}(B)$
\end{itemize}
\end{lemma}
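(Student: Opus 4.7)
The plan is to verify each part by unpacking definitions and using the Leibniz rule, together with the standard local-slice identity $B_f=A_f[r]$ recalled in the preliminaries.

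For (a), the starting observation is that if $D^{i+1}f=0$ then $D^i(Df)=0$, so $D(\mathcal{F}_i)\subset\mathcal{F}_{i-1}\subset\mathcal{F}_i$. Since $B_i=k[\mathcal{F}_i]$ is generated as a $k$-algebra by $\mathcal{F}_i$, the Leibniz rule immediately promotes this to $D(B_i)\subset B_i$, giving a restricted derivation $D_i$. For the kernel, note that $A=\mathcal{F}_0\subset\mathcal{F}_i\subset B_i$, so $A\subset\krn D_i$, and conversely $\krn D_i=\krn D\cap B_i\subset\krn D=A$.

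For (b), the plan is to produce a local slice $r$ that already lies in $\mathcal{F}_i$, and then use it to clear denominators. Since $D\neq 0$, a local slice exists: starting from any $b$ with $\deg_D b=m\geq 1$, the element $D^{m-1}b$ has $\deg_D=1$, so it is a local slice (and its image under $D$ gives a nonzero $f\in A$). Because $\deg_D r=1$, we have $r\in\mathcal{F}_1\subset\mathcal{F}_i$ for $i\geq 1$, and hence $A[r]\subset B_1\subset B_i$. Given any $b\in B$, the relation $B_f=A_f[r]$ from the preliminaries yields $s\geq 0$ with $f^s b\in A[r]\subset B_i$; since $f\in A\subset B_i$ as well, $b=(f^s b)/f^s\in\mathrm{frac}(B_i)$. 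The reverse containment $\mathrm{frac}(B_i)\subset\mathrm{frac}(B)$ is automatic.

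There is no real obstacle here: once one recognizes that every local slice automatically sits in $\mathcal{F}_1$ (by the very definition of local slice) and therefore in every $B_i$ with $i\geq 1$, the local-slice identity $B_f=A_f[r]$ does all the work for (b), and (a) reduces to the filtration property $D\mathcal{F}_i\subset\mathcal{F}_{i-1}$.
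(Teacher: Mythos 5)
Your proof is correct and follows essentially the same route as the paper: part (a) via the filtration property $D(\mathcal{F}_i)\subset\mathcal{F}_{i-1}\subset\mathcal{F}_i$ plus the Leibniz rule, and part (b) via the local-slice identity, which is exactly the paper's observation that $S^{-1}B_i=S^{-1}B=S^{-1}A[r]$ for $S=A\setminus\{0\}$ and some $r\in\mathcal{F}_1$. You merely spell out a few details the paper leaves implicit (the kernel computation and the existence of a local slice of degree one), which is fine.
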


\begin{proof} Given $i\ge 1$, the definition of $\mathcal{F}_i$ implies that $D(\mathcal{F}_i)\subset\mathcal{F}_{i-1}\subset\mathcal{F}_i$. Since $D$ restricts to a generating set for $B_i$, it follows that $D$ restricts to $B_i$, and part (a) is confirmed.

Part (b) follows from the observation that $S^{-1}B_i=S^{-1}B=S^{-1}A[r]$ for $S=A\setminus\{ 0\}$ and some $r\in \mathcal{F}_1$.
\end{proof}

\subsection{Degree Resolutions for Affine Rings}

Assume that $B$ is an affine $k$-domain. In this case, $B_N=B$ for some $N\ge 0$. 
It is possible that $B_i=B_{i+1}$ for some $i$. Let $n_i$, $0\le i\le r$, be the unique subsequence of $0,1,...,N$ such that:
\[
 \{ B_0,...,B_N\}=\{ B_{n_0},...,B_{n_r}\}   \quad{\rm and}\quad  B_{n_i-1}\subsetneqq B_{n_i}\subsetneqq B_{n_{i+1}} \,\,{\rm for}\,\, 1\le i< r
\]
Note that, when $D\ne 0$, $n_0=0$, $n_1=1$ and $B_{n_r}=B$. Let $\mathcal{N}_B(A)=\{0,1,n_2,...,n_r\}$. 
Both the integer $r$ and the sequence of subrings
\begin{equation}\label{resolution}
A=B_0\subset B_1\subset B_{n_2}\subset\cdots\subset B_{n_r}=B
\end{equation}
are uniquely determined by $A$. 
\begin{definition} The sequence of inclusions {\rm  (\ref{resolution})} is the {\bf degree resolution} of $B$ over $A$. 
The integer $r$ is the {\bf index} of $A$ in $B$, denoted ${\rm index}_B(A)$; we also say that $r$ is the {\bf index} of $D$.
\end{definition}

We make the following observations.
\begin{enumerate}
\item [(a)] ${\rm index}_B(A)+1=\vert\mathcal{N}_B(A)\vert$
\item [(b)] ${\rm index}_{B_{n_i}}(A)=i+1$
\item [(c)] ${\rm index}_B(A)=0$ if and only if $A=B$ if and only if $D=0$
\item [(d)] If $D$ has a slice, then ${\rm index}_B(A)=1$
\end{enumerate}

Let $R\subset B$ be an affine subring such that $D$ restricts to $R$. The induced filtration of $R$ is $R=\bigcup_{i\ge 0}R\cap\mathcal{F}_i$ 
and if $R_{n_i}=R\cap B_{n_i}$ for $n_i\in\mathcal{N}_B(A)$, then the degree resolution of $R$ over $R\cap A$ is a refinement of the sequence:
\[
R\cap A=R_0\subset R_1\subset R_{n_2}\subset \cdots\subset R_{n_r}=R
\]
Therefore, $\mathcal{N}_R(R\cap A)\subset\mathcal{N}_B(A)$ and ${\rm index}_R(R\cap A)\le {\rm index}_B(A)$.

\begin{example}\label{dim-2} {\rm Let $B=k^{[2]}$ and let $D\in {\rm LND}(B)$ be nonzero. By Rentschler's Theorem, there exist
$x,y\in B$ such that $A=\krn D=k[x]$, $Dy\in k[x]$ and $B=k[x,y]$. See \cite{Freudenburg.06}, Thm.\,4.1. 
Therefore, $\mathcal{F}_n=\mathcal{G}_n(y)=A\oplus Ay\oplus \cdots \oplus Ay^n$ for each $n\ge 0$. In particular, every nonzero element of ${\rm LND}(B)$ has index one. }
\end{example}

\begin{example}\label{dim-3} {\rm
Let $B=k[x,y,z]=k^{[3]}$. Define $P,Q\in B$ by $Q=xz+y^2$ and $P=y+Q^2$, and 
define $D\in {\rm LND}(B)$ by:
\[
D=P_z\partial_y-P_y\partial_z = 2xQ\,\partial_y-(1+4yQ)\,\partial_z
\]
Then $A=\krn D=k[x,P]$ and $DQ=x$. Define $J\subset\Z^2$ by:
\[
J=\{ (i,j)\,|\, 0\le i\le 3\, ,\, j\ge 0\}
\]
Given $n\ge 0$, define $J_n\subset J$ by:
\[
J_n=\{ (i,j)\in J\, |\, i+4j\le n\}
\]
We will show:
\begin{equation}\label{dim3}
B= \bigoplus_{(i,j)\in J}A\, Q^iz^j \quad {\rm and}\quad \mathcal{F}_n = \bigoplus_{(i,j)\in J_n}A\, Q^iz^j 
\end{equation}
Since $y\in k[P,Q]$ we have $B=k[x,P,Q,z]=A[z,Q]$. In addition, the equality 
\[
xz=Q-(P-Q^2)^2
\]
shows that $Q$ is integral of degree 4 over $A[z]\cong_AA^{[1]}$. Therefore:
\[
B=A[z,Q] = A[z]\oplus A[z]Q\oplus A[z]Q^2\oplus A[z]Q^3 = \bigoplus_{(i,j)\in J}A\, Q^iz^j
\]
This shows the first equality of (\ref{dim3}). Since $\deg_DQ=1$ and $\deg_Dz=4$, the degrees $\deg_D(Q^iz^j)$ for $(i,j)\in J$ are distinct and $\{Q^iz^j\,\vert\, (i,j)\in J\}$ is a $D$-basis for $B$, which implies the second equality of (\ref{dim3}). 
Therefore, $\mathcal{N}_B(A)=\{ 0,1,4\}$ and ${\rm index}_B(A)=2$. 
}
\end{example}

\begin{example}\label{dim-4} {\rm Let $B=k[x_1,x_2,y_1,y_2]=k^{[4]}$ and define $T\in {\rm LND}(B)$ by:
\[
T=x_1\partial_{y_1}+x_2\partial_{y_2}
\]
Then $A=\krn T=k[x_1,x_2,g]$ where $g=x_1y_2-x_2y_1$. Since $y_1$ and $y_2$ are local slices, $B=B_1=k[\mathcal{F}_1]$, and ${\rm index}_B(A)=1$. 
We calculate $\mathcal{F}_1$.

Define $M\subset\mathcal{F}_1$ by $M=A+Ay_1+Ay_2$. 
Suppose that $x_1w\in M$ for $w\in\mathcal{F}_1$, and write $x_1w=a_0+a_1y_1+a_2y_2$ for $a_i\in A$. Then $x_1Dw=a_1x_1+a_2x_2$, which implies that 
$a_2\in x_1B\cap A=x_1A$ and $a_0+a_1y_1\in x_1B$. 

Let $p:B\to B/x_1B$ be the canonical surjection, let $\bar{b}=p(b)$ for $b\in B$, and let $\bar{A}=p(A)=k[\bar{x_2},\bar{g}]$, noting that $\bar{g}+\bar{x}_2\bar{y}_1 = 0$. 
If $\bar{A}X\oplus\bar{A}Y$ is the free $\bar{A}$-module of rank 2, then:
\[
\bar{A}+\bar{A}\bar{y}_1 = \bar{A}X\oplus\bar{A}Y/\bar{A}(\bar{g}X+\bar{x}_2Y)
\]
Therefore, $\bar{a}_0+\bar{a}_1\bar{y}_1=0$ implies $a_0+a_1y_1=\alpha (g+x_2y_1)=\alpha x_1y_2$ for some $\alpha\in A$. So $w\in M$ and $M=\mathcal{F}_1$. 
In addition, the plinth ideal $D\mathcal{F}_1$ equals $x_1A+x_2A$. 
Note that, unlike in the preceding examples, $\mathcal{F}_1$ is not a free $A$-module. 
}
\end{example}

%%%%%%%%%%%%%%%%%%%%%%%%%%%%%%%%%%%%%%%%%%%%%%%%%%%%%%%%%%%%%%%%

\section{Canonical Factorizations}\label{canonical}

We continue the notation and assumptions of the preceding section, 
with the added assumptions that $k$ is algebraically closed, and that both $A$ and $B$ are $k$-affine. In this case, the geometric content of {\it Lemma\,\ref{index-res}} is as follows. 

Let $X={\rm Spec}(B)$ and $Y={\rm Spec}(A)$, and let $\pi :X\to Y$ be the quotient map for the $\G_a$-action on $X$ determined by $D$. 
By {\it Lemma\,\ref{noether}},  $B_{n_i}$ is affine for each $n_i\in\mathcal{N}_B(A)$. Define $X_i={\rm Spec}(B_{n_i})$,
$0\le i\le r$. 

For $0\le i\le r-1$, the inclusion $B_{n_i}\to B_{n_{i+1}}$ induces a dominant $\G_a$-equivariant morphism $\pi_i:X_{i+1}\to X_i$ which is birational if $i\ne 0$.
Therefore, $\pi$ factors into the uniquely determined sequence of dominant $\G_a$-equivariant morphisms
\begin{equation}\label{quotient}
X=X_r  \xrightarrow{\pi_{r-1}} X_{r-1}  \to\cdots\to X_2  \xrightarrow{\pi_1} X_1\xrightarrow{\pi_0} X_0=Y
\end{equation}
where each morphism $\pi_{r-1},...,\pi_1$ is birational. 
\begin{definition} The sequence of mappings {\rm (\ref{quotient})} is the {\bf canonical factorization} of the quotient morphism $\pi$ for the $\G_a$-action determined by $D$. The integer $r$ is the {\bf index} of the $\G_a$-action. 
\end{definition}
From {\it Thm.\,\ref{KZ1}} and {\it Thm.\,\ref{KZ2}}, we conclude that the maps $\pi_1,...,\pi_{r-1}$  in the canonical factorization (\ref{quotient}) form a sequence of $\G_a$-equivariant affine modifications. 
Regarding fixed points, note that $\pi_{i-1}(X_i^{\G_a})\subset X_{i-1}^{\G_a}$ for each $i$ with $1\le i\le r$.
Note also that, for $1\le i\le r$, the $\G_a$-action on $X_i$ has index $i$, and its canonical factorization is given by $\pi_0\pi_1\cdots\pi_{i-1}$.

%%%%%%%%%%%%%%%%%%%%%%%%%%%%%%%%%%%%%%%%%%%%%%%%%%%%%%%%%%%%%%%%%%%%%

\section{$\G_a$-Actions on $\A^3$}\label{dim-3}

Suppose that $\rho :\G_a\times\A^3\to \A^3$
is a $\G_a$-action defined by the locally nilpotent derivation $D$ of $k^{[3]}$. Let $A=\krn D$ and $Y={\rm Spec}(A)$, and let $\pi :X\to Y$ be the quotient morphism for $\rho$. The following properties are known. 
\begin{itemize}
\item [(a)] $A\cong k^{[2]}$, or equivalently, $Y\cong \A^2$ (due to Miyanishi).
\item [(b)] $\pi$ is surjective (due to Bonnett).
\item [(c)] The plinth ideal $I_1=A\cap DB$ is a principal ideal of $A$ (due to Daigle). Equivalently, there exists a local slice $r$ of $D$ for which $\mathcal{F}_1=A\oplus Ar$. 
\item [(d)] If $\rho$ is fixed-point free, then $\rho$ is a translation, i.e., given by $\rho (t,(x,y,z))=(x,y,z+t)$ for some coordinates $(x,y,z)$ on $\A^3$ (due to Kaliman). 
\end{itemize}
None of these properties generalizes to higher dimensional affine spaces. See \cite{Freudenburg.06}, Chap.\,5 for details about these results.

Suppose that the canonical factorization of $\pi$ is given as in line (\ref{quotient}) above. Let $C\subset Y$ be the curve defined by $I_1$, which is, in general, reducible. We have:
\begin{itemize}\setcounter{enumi}{4}
\item [(e)] Each irreducible component of $C$ is a polynomial curve (\cite{Kaliman.04}, Thm.\,5.2).
\item [(f)] $X_1\cong Y\times\A^1\cong\A^3$ and $\pi_0 :X_1\to Y$ is projection on the first factor.
\item [(g)] Given $p\in Y\setminus C$, the fiber $\pi_0^{-1}(p)\cong\A^1$ is a single orbit.
\item [(h)] $X_1^{\G_a}\subset \pi_0^{-1}(C)$
\item [(i)] The mapping $\pi_1\cdots\pi_{r-1}$ is a $\G_a$-equivariant birational endomorphism of $\A^3$
\end{itemize}
In particular, part (f) classifies the $\G_a$-actions on $\A^3$ of index one.

Some of the examples presented in the next section use the following fact. Let $L\subset H\subset X=\A^3$, where $H=\A^2$ is a coordinate plane and $L=\A^1$ is a coordinate line. 
Let $\beta :X^*\to X$ be the blow-up of $X$ along $L$, and let $H^*\subset X^*$ be the proper transform of $H$.
If $U\subset X^*$ is the open subset $U=X^*\setminus H^*$, then $U\cong\A^3$ and $U\hookrightarrow X^*\xrightarrow{\beta} X$ is a birational endomorphism of $\A^3$. 

%%%%%%%%%%%%%%%%%%%%%%%%%%%%%%%%%%%%%%%%%%%%%%%%%%%%%%%%%%%%%%%%%%%%%

\section{Examples}\label{examples}

\subsection{The $(1,2)$ Action on $\A^3$} 
Let $B_1=k[x,y,z]=k^{[3]}$ and define the derivation $D_1$ of $B$ by $D_1=x\frac{\partial}{\partial y}$. Let $X_1=\A^3$ and let $H\subset X_1$ be the plane defined by $x=0$. 
Then $X_1^{\G_a}=H$
for the $\G_a$-action on $X_1$ defined by $D_1$. The kernel of $D_1$ is $A=k[x,z]$, and if $Y={\rm Spec}(A)$, then the quotient map $\pi_0 :X_1\to Y$ is a standard projection of $\A^3$ onto $\A^2$. 

Let $C\subset H$ be the curve (a coordinate line) defined by $x=z+y^2=0$. 
Let $\beta :X_1^*\to X_1$ be the blow-up of $X_1$ along $C$, let $E\subset X_1^*$ be the exceptional divisor lying over $C$, and let $H^*\subset X_1^*$ be the strict transform of $H$.
If $X_2\subset X_1^*$ is the open subset $X_2=X_1^*\setminus H^*$, then (as observed above) $X_2\cong\A^3$ 
and the mapping 
\[
\pi_1 : X_2\hookrightarrow X_1^*\xrightarrow{\beta} X_1
\]
is a birational endomorphism of $\A^3$. 
Since $C\subset X_1^{\G_a}$, the $\G_a$-action on $X_1$ lifts to $X_2$ and $\pi_1$ is equivariant. 

The curve $C$ is defined by the ideal $I=xB_1+(z+y^2)B_1$, where $D_1I\subset I$. If $B_2=k[X_2]$, 
then $B_2=B_1[x^{-1}I]=k[x,y,u]$, where $u=\frac{z+y^2}{x}$. 
If $D_2$ is the extension of $D_1$ to $B_2$, then:
\[
 D_2x=0 \,\, ,\,\, D_2y=x \,\, ,\,\, D_2u=2y
 \]
Moreover, $A=\krn D_2=k[x,z]=k[x,xu-y^2]$. $D_2$ is the homogeneous $(1,2)$ derivation of $k^{[3]}$, which is of index 2;  see \cite{Freudenburg.06}, 5.1.5. 

The canonical factorization of the quotient morphism $\pi :X_2\to Y$ is given by: 
\[
X=X_2  \xrightarrow{\pi_1} X_1 \xrightarrow{\pi_0} X_0=Y
\]
Let $E_0\subset X_2$ be the plane defined by $x=0$, noting that $M=X_2^{\G_a}\subset E_0$ is the line defined by $x=y=0$. Let $L\subset Y$ be the line defined by $x=0$. 
Since $(B_2)_x=(B_1)_x=A_x[y]$, 
it follows that $\pi_1 :X_2\setminus E_0\to X_1\setminus H$ is an isomorphism, and $\pi_0^{-1}(q)$ is a single orbit (isomorphic to $\A^1$) for each $q\in Y\setminus L$. 

Consider the restriction:
\[
\pi : E_0\cong \A^2 \xrightarrow{\pi_1} H\cong\A^2\xrightarrow{\pi_0} L\cong\A^1
\]
Given $p\in L$, $\pi^{-1}(p)$ is a union $P_1\cup P_2$ of two lines in $E_0$ which are orbits in $X_2$ if $p\ne 0$, and $\pi^{-1}(0)=M$. 
The situation is depicted in {\it Fig.\,2}. 

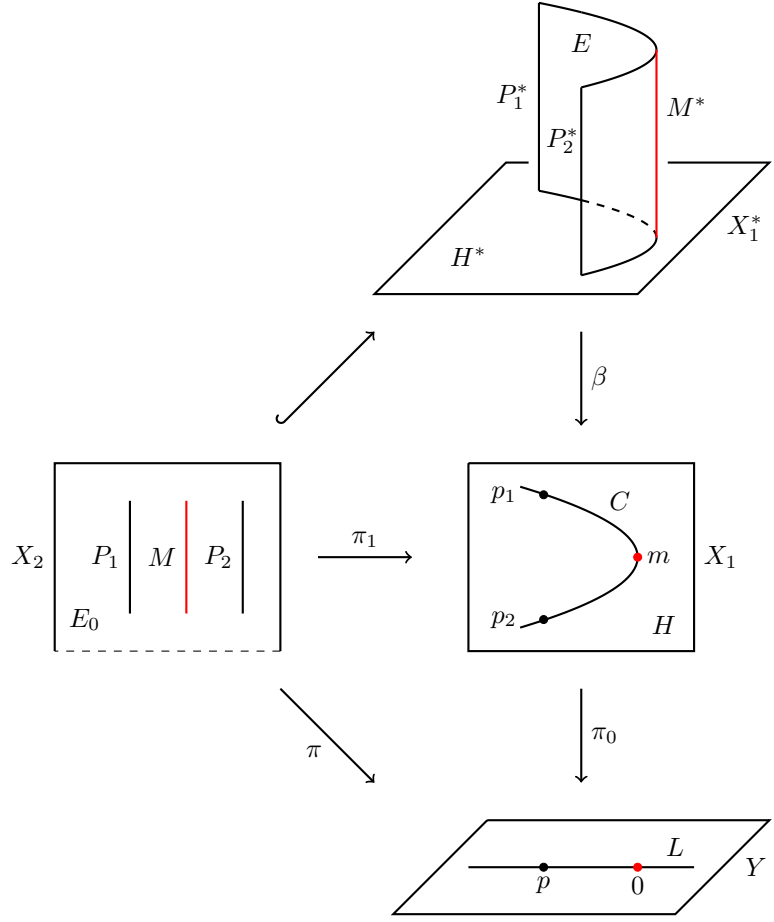
\begin{figure}

\begin{tikzpicture}

 %sketch of E
  \draw[domain=-1:0,thick,smooth,variable=\t]  plot ({-\t*\t},{.5*\t});
  \draw[domain=0:1,thick,smooth,variable=\t,dashed]  plot ({-\t*\t},{.5*\t});
   \draw[domain=1:1.25,thick,smooth,variable=\t]  plot ({-\t*\t},{.5*\t});

 \draw[domain=-1:0,thick,smooth,variable=\t]  plot ({-\t*\t},{.5*\t+2.5});
  \draw[domain=0:1,thick,smooth,variable=\t]  plot ({-\t*\t},{.5*\t+2.5});
   \draw[domain=1:1.25,thick,smooth,variable=\t]  plot ({-\t*\t},{.5*\t+2.5});
   
   \draw[thick] (-1,-.5) -- (-1,2);
 \draw[thick] (-1.5625,.625) -- node[left]{$P_1^*$} (-1.5625,3.125);
%\draw[thick,red] (0,0) -- node[right,black]{$M^*$}(0,2.5);
\draw[thick,red] (0,0) -- (0,2.5);
\node[above] at (.4,1.5){$M^*$};

  \node[above] at (-1,2.35){$E$};
   \node[above] at (-1.25,1){$P_2^*$};

%sketch of H*
\draw[thick] (.15,1) -- (1.5,1) -- node[right]{$\,\,\,X_1^*$}(-.25,-.75) -- (-3.75,-.75) -- (-2,1) -- (-1.7,1);
 \node[above] at (-2.5,-.5){$H^*$};
 
  \draw[->,thick] (-1,-1.25) -- node[right]{$\beta$} (-1,-2.5);

%sketch of H
\draw[thick] (-2.5,-3) -- (.5,-3) -- node[right]{$X_1$} (.5,-5.5) -- (-2.5,-5.5) -- (-2.5,-3);
\draw[domain=-1.25:1.25,thick,smooth,variable=\t]  plot ({-\t*\t-.25},{.75*\t - 4.25});
 \filldraw[red] (-.25,-4.25) circle (1.5pt) node[right,black]{$m$};
 \filldraw (-1.5,-5.08) circle (1.5pt) node[left]{$p_2\,\,\,\,$};
 \filldraw (-1.5,-3.42) circle (1.5pt) node[left]{$p_1\,\,\,\,$};
 \node[right] at (-.75,-3.5){$C$};
 \node[above]  at (.1,-5.4){$H$};

 \draw[->,thick] (-1,-6) -- node[right]{$\pi_0$} (-1,-7.25);
 
 %sketch of Y
 \draw[thick] (-2.25,-7.75) -- (1.5,-7.75) -- node[right]{$\,\,\,Y$}(.25,-9) -- (-3.5,-9) -- (-2.25,-7.75);
 \draw[thick] (.5,-8.375) -- (-2.5,-8.375);
 \filldraw (-1.5,-8.375) circle (1.5pt) node[below]{$p$};
  \filldraw[red] (-.25,-8.375) circle (1.5pt) node[below,black]{$0$};
  \node[above] at (.25,-8.35){$L$};
 
 %sketch of E0
 \draw[thick] (-8,-5.5) -- node[left]{$X_2$}(-8,-3) -- (-5,-3) -- (-5,-5.5);
 \draw[dashed] (-5,-5.5) -- (-8,-5.5);
 \draw[->,thick] (-4.5,-4.25) -- node[above]{$\pi_1$}(-3.25,-4.25);
 \draw[right hook->,thick] (-5,-2.5) -- (-3.75,-1.25);
 
 \draw[thick] (-7,-5) -- node[left]{$P_1$}(-7,-3.5);
 \draw[thick,red] (-6.25,-5) -- node[left,black]{$M$}(-6.25,-3.5);
 \draw[thick] (-5.5,-5) -- node[left]{$P_2$}(-5.5,-3.5);
  \draw[->,thick] (-5,-6) -- node[below]{$\pi\quad$}(-3.75,-7.25);
  
  \node[above] at (-7.6,-5.35){$E_0$};
  
\end{tikzpicture}
\caption{Canonical Factorization for the $(1,2)$ $\G_a$-Action on $\A^3$}

\end{figure}

%%%%%%%%%%%%%%%%%%%%%%%%%%%%%%%%%%%%%%%%%%%%%%%%%%%%%%%%%%%%%%%%%%%%%

\subsection{The $(2,5)$ Action on $\A^3$}\label{(2,5)} This example is considerably more complicated than the preceding example. It is of rank three, meaning that the ring of invariants for the $\G_a$-action on $\A^3$ does not contain a variable.  

\subsubsection{The $(2,5)$ Derivation} The standard $\Z$-grading of the polynomial ring $B=k[x,y,z]=k^{[3]}$ is defined by letting $x,y,z$ be homogeneous of degree one.  
Define homogeneous elements $F,G,R,S\in B$ by:
 \[
 F=xz-y^2 \,\, ,\quad G= zF^2+2x^2yF+x^5  \,\, ,\quad R=x^3+yF \,\, ,\quad S=x^2y+zF 
\]
Observe the following relations. 
\[
F^3+R^2=xG\,\, ,\quad x^2R+FS=G\,\, ,\quad xS-yR=F^2
\]
The homogeneous $k$-derivation $D$ of $B$ defined by the jacobian determinant
\[
Dh=\frac{\partial (F,G,h)}{\partial (x,y,z)} \quad (h\in B)
\]
is locally nilpotent, and if $A=\krn D$, then $A=k[F,G]$; see \cite{Freudenburg.06}, 5.4. 
$D$ is the homogeneous (2,5) derivation of $B$ and the corresponding $\G_a$-action is the (2,5) $\G_a$-action on $\A^3$.
Note the following images. 
\[
DR=-FG \,\, ,\quad Dx= -2FR \,\, ,\quad DS = x(5xG-4F^3)  \,\, ,\quad Dy = 6x^2R-G \,\, ,\quad Dz = 2x(5yR+F^2)
\] 
In particular, $\deg_DR=1$, $\deg_Dx=2$, $\deg_DS=5$, $\deg_Dy=6$ and $\deg_Dz=10$. 

\subsubsection{The Module $\mathcal{F}_{10}$} 
Define the submodule $N_0$ of $\mathcal{F}_{10}$ by:
\[
N_0=\mathcal{G}_{10}(R)=A+AR+AR^2+AR^3+AR^4+AR^5+AR^6+AR^7+AR^8+AR^9+AR^{10}
\]
Since $R^2=xG-F^3$, we see that:
\[
N_0\subset N_1:=A+AR+Ax+AxR+Ax^2+Ax^2R+Ax^3+Ax^3R+Ax^4+Ax^4R +Ax^5   \subset \mathcal{F}_{10}
\]
Since $x^3=R-yF$ we see that:
\[
N_1\subset N_2:=A+AR+Ax+AxR+Ax^2+Ax^2R+Ay+Ax^3R+Axy+Ax^4R+Ax^2y      \subset \mathcal{F}_{10}
\]
Since $x^2R=G-FS$ we see that:
\[
N_2\subset N_3:=A+AR+Ax+AxR+Ax^2+AS+Ay+AxS+Axy+Ax^2S +Ax^2y     \subset \mathcal{F}_{10}
\]
Since $x^2y=S-zF$ we see that:
\[
N_3\subset M:=A+AR+Ax+AxR+Ax^2+AS+Ay+AxS+Axy+Ax^2S +Az\subset \mathcal{F}_{10}
\]
Note that $F^2M\subset N_1\subset M$.
 \begin{lemma}\label{lemma1} $FB\cap M=FM$
 \end{lemma}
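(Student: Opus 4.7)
The inclusion $FM\subseteq FB\cap M$ is trivial; the content is the reverse containment. My plan is to reduce modulo $F$. Since $F=xz-y^2$ is irreducible in $B=k[x,y,z]$, the quotient $\bar B := B/FB$ is a domain. Given $b\in B$ with $Fb\in M$, I would write $Fb = \sum_{i=1}^{11} a_i m_i$ with $a_i\in A$, where $m_1,\ldots,m_{11}$ are the eleven listed generators of $M$. Reducing modulo $F$ gives $\sum_i\bar a_i\bar m_i=0$ in $\bar B$. The whole argument then reduces to showing that the images $\bar m_i$ are linearly independent over $\bar A := A/FA$ inside $\bar B$. Granted this, each $\bar a_i$ vanishes, so each $a_i\in FB\cap A = FA$ by factorial closure of $A$ in $B$ (equation~\eqref{krn-ideal}); hence $Fb\in FM$, and cancelling $F$ in the domain $B$ yields $b\in M$.

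To establish the independence I would use the classical isomorphism $\bar B = k[x,y,z]/(xz-y^2)\xrightarrow{\sim}k[s^2,st,t^2]\subset k[s,t]$ sending $\bar x\mapsto s^2$, $\bar y\mapsto st$, $\bar z\mapsto t^2$. Reducing the identities $R=x^3+yF$, $S=x^2y+zF$, and $G=x^5+2x^2yF+zF^2$ modulo $F$ yields $\bar R=s^6$, $\bar S=s^5t$, $\bar G=s^{10}$; in particular $\bar A=k[\bar G]=k[s^{10}]$. The eleven images $\bar m_i$ become the monomials
\[
1,\ s^2,\ s^4,\ s^6,\ s^8;\qquad st,\ s^3t,\ s^5t,\ s^7t,\ s^9t;\qquad t^2.
\]
Multiplication by $k[s^{10}]$ then sweeps out, respectively, all $s^a$ with $a$ a nonnegative even integer, all $s^a t$ with $a$ a nonnegative odd integer, and all $s^{10k}t^2$ with $k\ge 0$. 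These three families are pairwise disjoint in $k[s,t]$ (distinguished by the exponent of $t$), and within each of the first two families the five generators occupy the five distinct residue classes of the $s$-exponent modulo $10$. Consequently the $k[s^{10}]$-multiples of the $\bar m_i$ appear as pairwise distinct monomials of $k[s,t]$, and the required $\bar A$-linear independence is immediate.

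The only point requiring care is correctly identifying $\bar A=k[s^{10}]$ inside $\bar B$ together with the three residue computations above; after that the argument is essentially a one-line monomial count. I therefore do not expect a genuine obstacle, all the more since the eleven generators of $M$ were chosen precisely so that their $\deg_D$ values are the distinct integers $0,1,\ldots,10$, which both makes them a $D$-set (hence $A$-linearly independent by Lemma~4.2(a)) and forces the transparent monomial configuration in $\bar B$ used above.
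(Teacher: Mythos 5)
Your proof is correct and follows essentially the same route as the paper: reduce modulo $F$, show that the eleven generators of $M$ have images that are linearly independent over $\bar{A}$ (equivalently, that $\bar{M}$ is free of rank $11$), and then lift using $FB\cap A=FA$ from (\ref{krn-ideal}) and cancellation in the domain $B$. The only cosmetic difference is that you check the independence via the parametrization $B/FB\cong k[s^2,st,t^2]$ and a monomial count, while the paper works directly with the decomposition $\bar{B}=k[\bar{x},\bar{z}]\oplus k[\bar{x},\bar{z}]\bar{y}$ and $\bar{A}=k[\bar{x}^5]$ — the same computation in different coordinates.
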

 
 \begin{proof} Let $\pi_F:B\to B/FB$ be the canonical surjection. Let $\bar{b}=\pi_F(b)$ for $b\in B$, $\bar{A}=\pi_F(A)$, $\bar{M}=\pi_F(M)$ and $\bar{B}=\pi_F(B)$. 
 Since $F=xz-y^2$, we see that:
 \begin{equation}\label{B/FB}
 k[\bar{x},\bar{z}]=k^{[2]} \quad {\rm and}\quad \bar{B}=k[\bar{x},\bar{z}]\oplus k[\bar{x},\bar{z}]\bar{y}
 \end{equation}
 We have:
 \[
 \bar{G}=\bar{x}^5\,\, ,\,\, \bar{R}=\bar{x}^3\,\, ,\,\, \bar{S}=\bar{x}^2\bar{y}\,\, ,\,\, \bar{A}=k[\bar{x}^5]
 \]
 Define:
 \begin{equation}\label{big-O}
 \mathcal{O}=k[\bar{x}]=\bar{A}\oplus\bar{A}\bar{x}+\oplus\bar{A}\bar{x}^2\oplus\bar{A}\bar{x}^3\oplus\bar{A}\bar{x}^4 
  \end{equation}
 Then:
 \begin{eqnarray*}
 \bar{M} &=& \bar{A}  +\bar{A}\bar{x}^3+  \bar{A} \bar{x}+  \bar{A}\bar{x}^4+  \bar{A}\bar{x}^2 + 
 \bar{A}\bar{x}^2\bar{y}+  \bar{A}\bar{y} + \bar{A}\bar{x}^3\bar{y}+\bar{A}\bar{x}\bar{y} +\bar{A}\bar{x}^4\bar{y}  +\bar{A}\bar{z}
 \\
 &=& \mathcal{O}+\mathcal{O}\bar{y} +\bar{A}\bar{z}
 \end{eqnarray*}
From (\ref{B/FB}) and (\ref{big-O}) it follows that $\bar{M}$ is a free $\bar{A}$-module of rank 11. Since $FB\cap A=FA$ by (\ref{krn-ideal}), we conclude that $FB\cap M=FM$.
 \end{proof}
 
 \begin{lemma}\label{lemma2} $GB\cap M=GM$
 \end{lemma}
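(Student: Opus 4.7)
The plan is to follow the strategy of the preceding lemma: pass to $\bar{B} := B/GB$ via the canonical surjection $\pi_G$, set $\bar{A} := \pi_G(A) = k[\bar{F}]$, and reduce the claim to showing that the eleven generators of $M$ have $\bar{A}$-linearly independent images in $\bar{B}$. First I would note that these generators form a $D$-set: by additivity of $\deg_D$ on products, their $\deg_D$-values are the eleven distinct integers $0, 1, 2, \ldots, 10$. Hence $M$ is a free $A$-module of rank $11$, and $GB \cap M = GM$ becomes equivalent to the asserted $\bar{A}$-linear independence: given $h = \sum a_i m_i \in GB \cap M$ with $a_i \in A$, the relation $\sum \bar{a}_i \bar{m}_i = 0$ forces each $\bar{a}_i = 0$, so $a_i \in A \cap GB = GA$ by (\ref{krn-ideal}) and therefore $h \in GM$.

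For the independence, the natural move is to localize at $F$. The pair $F, G$ is a regular sequence in the Cohen-Macaulay ring $B$ (the reduction $G \equiv x^5 \pmod{F}$ is a nonzerodivisor in the integral domain $B/FB$), so $\bar{F}$ is a nonzerodivisor in $\bar{B}$ and $\bar{B} \hookrightarrow \bar{B}_F$. The payoff is that $\bar{B}_F$ is very concrete: using $R = x^3 + yF$, $x^2 R + FS = G$, and $G = zF^2 + 2x^2yF + x^5$, one checks that $y, z, S$ all lie in $A_F[x, R]$, so $B_F = A_F[x, R]/(xG - R^2 - F^3)$. Reducing modulo $G$ yields
\[
\bar{B}_F \;=\; k[F^{\pm 1}][x, R]/(R^2 + F^3),
\]
a free $k[F^{\pm 1}, x]$-module of rank $2$ with basis $\{1, R\}$, and therefore a free $k[F^{\pm 1}]$-module with basis $\{x^i, x^i R \mid i \ge 0\}$.

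The last step is routine coefficient-matching in this basis. Five of the generators ($1, \bar{R}, \bar{x}, \bar{x}\bar{R}, \bar{x}^2$) are themselves basis elements, while the remaining six compute to $\bar{S} = -F^{-1} x^2 R$, $\bar{y} = F^{-1} R - F^{-1} x^3$, $\bar{x}\bar{S} = -F^{-1} x^3 R$, $\bar{x}\bar{y} = F^{-1} xR - F^{-1} x^4$, $\bar{x}^2\bar{S} = -F^{-1} x^4 R$, and $\bar{z} = F^{-2} x^5 - 2F^{-2} x^2 R$. In a putative relation $\sum p_i(F)\bar{m}_i = 0$, the positions $1, x, x^2, x^3, x^4, x^5, x^3R, x^4R$ kill eight of the $p_i$ outright, and back-substitution into the equations at positions $R, xR, x^2 R$ kills the remaining three. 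The main obstacle is pinpointing the right localization: passing to $\bar{B}_F$ simultaneously eliminates the ambient cubic defining equation of $\bar{B}$ and flattens the curved relation $R^2 + F^3 = xG$ to the simple $R^2 = -F^3$, reducing the independence check to linear algebra over $k[F^{\pm 1}]$; a direct analysis inside $\bar{B}$ would require a considerably more delicate degree-by-degree argument.
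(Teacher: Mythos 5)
Your proof is correct, but it takes a genuinely different route from the paper's. The paper never verifies linear independence of the images of the generators of $M$ modulo $G$ directly; instead it proves the corresponding statement $GB\cap N_1=GN_1$ for the auxiliary module $N_1=\sum_{0\le i\le 5}Ax^i+\sum_{0\le i\le 4}Ax^iR$, whose generators reduce modulo $G$ into $\mathcal{R}[\bar x]$ with $\mathcal{R}=k[\bar F,\bar R]=\bar A\oplus\bar A\bar R$, and then transfers the result to $M$ by a bootstrap: if $Gw\in M$, then $F^2Gw\in F^2M\subset N_1$, hence $F^2w\in N_1\subset M$, and \emph{Lemma~\ref{lemma1}} ($FB\cap M=FM$, applied twice) gives $w\in M$. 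You work with $M$ itself: the $D$-set/degree argument correctly shows the eleven generators have distinct degrees $0,\dots,10$ and so are $A$-independent, and you establish independence of their images in $B/GB$ by passing to the localization $(B/GB)_F\cong k[F^{\pm 1}][x,R]/(R^2+F^3)$ and matching coefficients in the basis $\{x^i,x^iR\}$; your computed images ($\bar S=-F^{-1}x^2R$, $\bar y=F^{-1}(R-x^3)$, $\bar z=F^{-2}(x^5-2x^2R)$, etc.) are right and the elimination scheme does kill all eleven coefficients. What each approach buys: yours is self-contained for this lemma (it never invokes \emph{Lemma~\ref{lemma1}}) and makes the structure of $M$ modulo $G$ completely explicit, at the price of a localization at $F$; the paper's avoids any division by $F$ by trading $M$ for the cleaner module $N_1$, at the price of the extra bootstrapping step. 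The one spot where you are terser than you should be is the presentation claim $\bar B_F=k[F^{\pm 1}][x,R]/(R^2+F^3)$: that $R^2+F^3$ generates \emph{all} relations deserves a line (e.g. $R^2+F^3$ is irreducible over $k[F^{\pm 1},x]$, so the evident surjection from that two-dimensional affine domain onto the two-dimensional domain $\bar B_F$ must be injective), but this is routine and comparable in brevity to the paper's own assertion that $\mathcal{R}[\bar x]=\mathcal{R}^{[1]}$.
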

 
 \begin{proof} Let $\pi_G:B\to B/GB$ be the canonical surjection and let $\bar{D}$ be the locally nilpotent derivation of $B/GB$ induced by $D$. Let $\bar{b}=\pi_G(b)$ for $b\in B$, $\bar{A}=\pi_G(A)$ and $\bar{N}_1=\pi_G(N_1)$. Then $\bar{A}=k[\bar{F}]$. Define 
 $\mathcal{R}=k[\bar{F},\bar{R}]$. Since $xG=F^3+R^2$, we have $\bar{F}^3+\bar{R}^2=0$. In addition, note that $\mathcal{R}\subset\krn\bar{D}$ and $\bar{D}\bar{x}\ne 0$. We thus have:
 \[
 \mathcal{R}=\bar{A}\oplus \bar{A}\bar{R} \quad {\rm and}\quad \mathcal{R}[\bar{x}]=\mathcal{R}^{[1]}
 \]
 Therefore,
 \begin{eqnarray*}
 \bar{N}_1 &=& \bar{A} + \bar{A}\bar{R}+  \bar{A} \bar{x}+  \bar{A}\bar{x}\bar{R}+  \bar{A}\bar{x}^2+  \bar{A}\bar{x}^2\bar{R}+  \bar{A}\bar{x}^3 +\bar{A}\bar{x}^3\bar{R} 
 +\bar{A}\bar{x}^4+\bar{A}\bar{x}^4\bar{R}+\bar{A}\bar{x}^5 \\
 &=& \mathcal{R}\oplus\mathcal{R}\bar{x}\oplus\mathcal{R}\bar{x}^2\oplus\mathcal{R}\bar{x}^3\oplus\mathcal{R}\bar{x}^4  \oplus \bar{A}\bar{x}^5
 \end{eqnarray*}
 is a free $\bar{A}$-module of rank 11. Since $GB\cap A=GA$ by (\ref{krn-ideal}), we conclude that:
 \begin{equation}\label{N1}
  GB\cap N_1=GN_1
  \end{equation}
 
 Suppose that $Gw\in M$ for some $w\in B$. Then $F^2Gw\in F^2M\subset N_1$, so $F^2Gw\in GB\cap N_1=GN_1$ by (\ref{N1}). Therefore, $F^2w\in N_1\subset M$,
 so $F^2w\in F^2B\cap M=F^2M$ by {\it Lemma\,\ref{lemma1}}, and $w\in M$. 
\end{proof}

\begin{theorem} $M=\mathcal{F}_{10}$. 
\end{theorem}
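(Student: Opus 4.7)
The plan is to apply Theorem~\ref{main} with $n = 10$ and local slice $r = R$, for which $f = DR = -FG$. Take the initial module $M_0 := M$; this satisfies the required inclusions $\mathcal{G}_{10}(R) \subset M_0 \subset \mathcal{F}_{10}$, since $\mathcal{G}_{10}(R) = N_0 \subset M$ by construction, and since each of the eleven generators of $M$ has $D$-degree at most $10$ by the multiplicativity of $\deg_D$ together with the computed values $\deg_D R = 1$, $\deg_D x = 2$, $\deg_D S = 5$, $\deg_D y = 6$, $\deg_D z = 10$.

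By condition (1) of Theorem~\ref{main} with $s = 0$, it then suffices to establish that $FGB \cap M = FGM$. I would prove this by chaining together the two preceding lemmas as follows. Suppose $FGw \in M$ for some $w \in B$. By the second lemma, $FGw \in GB \cap M = GM$, so we may write $FGw = Gm$ with $m \in M$; canceling $G$ in the domain $B$ gives $Fw = m \in M$. By the first lemma, $Fw \in FB \cap M = FM$, hence $w \in M$. Therefore $FGw \in FGM$, and the reverse inclusion is trivial.

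With condition (1) of Theorem~\ref{main} thus verified at $s = 0$, the theorem immediately gives $\mathcal{F}_{10} = M_s = M_0 = M$, as desired.

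The substantive work of the argument has already been discharged by the two preceding lemmas, which analyze the structure of $M$ modulo the kernel elements $F$ and $G$ respectively; the present theorem is merely a short multiplicative combination of those two divisibility statements, packaged through the algorithmic criterion of Theorem~\ref{main}. I do not foresee any remaining obstacle beyond a careful recording of the cancellation step.
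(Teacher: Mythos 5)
Your proposal is correct and follows essentially the same route as the paper: the paper likewise takes $M$ as the initial module, invokes Theorem~\ref{main} to reduce to showing $FG\cdot B\cap M=FG\cdot M$ (here $FG=-DR$ up to sign), and deduces this from Lemma~\ref{lemma1} and Lemma~\ref{lemma2} by exactly the cancellation argument you spell out.
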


\begin{proof} By {\it Thm.\,\ref{main} (c)}, it will suffice to show $FG\cdot B\cap M=FG\cdot M$. This follows immediately from {\it Lemma\,\ref{lemma1}} and {\it Lemma\,\ref{lemma2}}.
\end{proof}

Note that, by degree considerations, $\mathcal{F}_{10}$ is a free $A$-module of rank 11. Therefore:
\begin{eqnarray*}
\mathcal{F}_0 &=& A \\
\mathcal{F}_1&=& A+AR \\
\mathcal{F}_2&=& A+AR+Ax \\
\mathcal{F}_3&=& A+AR+Ax+AxR \\
\mathcal{F}_4&=& A+AR +Ax+AxR+Ax^2\\
\mathcal{F}_5&=& A+AR +Ax+AxR+Ax^2+AS\\
\mathcal{F}_6&=& A+AR +Ax+AxR+Ax^2+AS+Ay\\
\mathcal{F}_7&=& A+AR +Ax+AxR+Ax^2+AS+Ay+AxS\\
\mathcal{F}_8&=& A+AR +Ax+AxR+Ax^2+AS+Ay+AxS+Axy\\
\mathcal{F}_9&=& A+AR +Ax+AxR+Ax^2+AS+Ay+AxS+Axy+Ax^2S\\
\mathcal{F}_{10}&=& A+AR +Ax+AxR+Ax^2+AS+Ay+AxS+Axy+Ax^2S+Az
\end{eqnarray*}

\subsubsection{Degree Resolution}

Results above give the degree resolution of $B$ induced by $D$.
\medskip

\begin{enumerate}
\item $B_0=\mathcal{F}_0=A=k[F,G]=k^{[2]}$
\smallskip
\item $B_1=k[\mathcal{F}_1]=A[R]=k[F,G,R]=k^{[3]}$  
\smallskip
\item $B_2=k[\mathcal{F}_2]=B_1[x]=k[F,G,R,x]$ where $xG=F^3+R^2$
\smallskip
\item $B_4=B_3=B_2$
\smallskip
\item $B_5=k[\mathcal{F}_5]=B_2[S]=k[F, R, x, S]$ where $F(xS-F^2)=R(R-x^3)$
\smallskip
\item $B_6=k[\mathcal{F}_6]=B_5[y]=k[F,x,S,y]$ where $x(S-x^2y)=F(F+y^2)$
\smallskip
\item $B_9=B_8=B_7=B_6$
\smallskip
\item $B_{10}=k[\mathcal{F}_{10}]=B_6[z]=B$
\end{enumerate}
\medskip

It follows that $\mathcal{N}_A(B)=\{ n_0,...,n_5\}=\{ 0,1,2,5,6,10\}$ and ${\rm index}_B(A)=5$. Observe that $B_0$, $B_1$, $B_2$, $B_{10}$ are UFDs, whereas neither $B_5$ nor $B_6$ is a UFD. 

\subsubsection{Fixed Points} Let $X_i={\rm Spec}(B_{n_i})$ for $i=0,...,5$.
\begin{enumerate}
\item $X_1^{\G_a}=\mathcal{V}(FG)\subset X_1$, which defines two planes in $\A^3$.
\smallskip
\item $X_2^{\G_a}=\mathcal{V}(F)\subset X_2$, which defines a cone. 
\smallskip
\item $X_3^{\G_a}=\mathcal{V}(F,R)\subset X_3$, which defines a plane. 
\smallskip
\item $X_4^{\G_a}=\mathcal{V}(F,x)\subset X_4$, which defines a plane. 
\smallskip
\item $X_5^{\G_a}=\mathcal{V}(x,y)\subset X_5$, which defines a line in $\A^3$. 
\end{enumerate} 

\subsubsection{Affine Modifications}
 
We describe each ring $B_{n_{i+1}}$ as a $\G_a$-equivariant affine modification of $B_{n_i}$, $1\le i\le 4$. 

\begin{enumerate} 
\item $B_2=B_1[G^{-1}J_1]$ for $J_1=G\cdot B_1+(F^3+R^2)\cdot B_1$
\smallskip
\item $B_5=B_2[F^{-1}J_2]$ for $J_2=F\cdot B_2+(G-x^2R)\cdot B_2$
\smallskip
\item $B_6=B_5[F^{-1}J_5]$ for $J_5=F\cdot B_5+(R-x^3)\cdot B_5$
\smallskip
\item $B_{10}=B_6[F^{-1}J_6]$ for $J_6=F\cdot B_6+(S-x^2y)\cdot B_6$
\end{enumerate}

\subsubsection{Canonical Factorization}

Let $X={\rm Spec}(B)=\A^3$ and $Y={\rm Spec}(A)=\A^2$, and let $\pi :X\to Y$ be the quotient morphism. 
Over points $p\in Y$ defined by $F=\alpha$, $G=\beta$, the fiber $\pi^{-1}(p)$ is a line which is a single orbit if $\alpha ,\beta\ne 0$; a union of five lines which are orbits if $\alpha =0$, 
$\beta\ne 0$; a union of two lines which are orbits if $\alpha\ne 0$, $\beta =0$; and a line of fixed points if $\alpha =\beta =0$.

Let $\pi_i:X_{i+1}\to X_i$ be the morphism induced by the inclusion $B_{n_i}\to B_{n_{i+1}}$, $0\le i\le 4$. The canonical factorization of $\pi$ is given by:
\[
X=X_5\xrightarrow{\pi_4}  X_4\xrightarrow{\pi_3}  X_3\xrightarrow{\pi_2}  X_2\xrightarrow{\pi_1}  X_1\xrightarrow{\pi_0}  X_0=Y
\]
We consider each mapping $\pi_i$ individually.

\begin{enumerate}

\item For $\pi_0$, we have $X_0=\A^2$, $X_1=X_0\times\A^1$ and $\pi_0$ is projection on the first factor. 
\medskip

\item For $\pi_1$, let $W_1=\mathcal{V}(G)\subset X_1$ and $W_2=\mathcal{V}(G)\subset X_2$. Since $(B_1)_G=(B_2)_G$, the mapping
\[
\pi_1: X_2\setminus W_2\to X_1\setminus W_1
\]
is an isomorphism. 
 We find that $W_1=\A^2$ and $\pi_1(W_2)=C$, where $C$ is the cuspidal cubic curve $\mathcal{V}(G,F^3+R^2)$ in $W_1$, and that $W_2=C\times\A^1$, 
where the restriction of $\pi_1$ to $W_2$ is projection on the first factor. The image of $\pi_1$ excludes $W_1\setminus C$. 
\medskip

\item For $\pi_2$, let $V_2=X_2^{\G_a}=\mathcal{V}(F)\subset X_2$ and $W_3=\mathcal{V}(F)\subset X_3$. Since $(B_2)_F=(B_5)_F$, the mapping
\[
\pi_2: X_3\setminus W_3\to X_2\setminus V_2
\]
is an isomorphism. 
We find that $\pi_2(W_3)=Z$, where $Z$ is the union of two lines $\mathcal{V}(F,G,R)$ and $\mathcal{V}(F,G-x^5,R-x^3)$ in $V_2$,
and that $W_3=Z\times\A^1$, where the restriction of $\pi_2$ to $W_3$ is projection on the first factor. 
The image of $\pi_2$ excludes $W_2\setminus Z$. 
\medskip

\item For $\pi_3$, let $V_3=X_3^{\G_a}=\mathcal{V}(F,R)\subset X_3$ and $V_4=X_4^{\G_a}=\mathcal{V}(F,x)=\mathcal{V}(F,R)\subset X_4$.
Since $(B_5)_F=(B_6)_F$ and $(B_5)_R=(B_6)_R$, the mapping
\[
\pi_3: X_4\setminus V_4\to X_3\setminus V_3
\]
is an isomorphism.
We find that $\pi_3(V_4)=L$, where $L$ is the line $\mathcal{V}(F,R,x)\subset X_3$, 
and that $V_4=L\times\A^1$ (a plane), where the restriction of $\pi_3$ to $V_4$ is projection on the first factor. 
The image of $\pi_3$ excludes $V_3\setminus L$. 
\medskip

\item For $\pi_4$, let $X_5^{\G_a}=\mathcal{V}(F,x)=\mathcal{V}(x,y)\subset X_5$.
Since $(B_6)_F=B_F$ and $(B_6)_x=B_x$, the mapping 
\[
\pi_4: X_5\setminus V_5\to X_4\setminus V_4
\]
is an isomorphism. 
We find that $\pi_4(V_5)=P$, where $P$ is the point $\mathcal{V}(F,x,S,y)$, and that $V_5=P\times\A^1$ (a line), where the restriction of $\pi_4$ to $V_5$ is projection on the first factor. 
The image of $\pi_4$ excludes $V_4\setminus P$. 
\end{enumerate}
\medskip

The affine modification $\pi_4 :X_5\to X_4$ differs from the first three in that its exceptional locus is one-dimensional. One way to understand this situation is to view 
$X_4$ as a subvariety of $\A^4$ given by $xT=F(F+y^2)$ in coordinates $x,F,y,T$ (where $T=S-x^2y$). According to {\it Prop.\,2.1} of \cite{Kaliman.Zaidenberg.99}, we can view $\pi_4$ as the restriction of the affine modification of $\A^4$ along the divisor $\{ F=0\}$ with center $\{ F=T=0\}$. If $\beta :\mathcal{X}\to\A^4$ is the associated morphism, then 
$\mathcal{X}\cong\A^4$ with coordinates $x,F,y,\frac{T}{F}$; and $X_5\subset\mathcal{X}$ is the hyperplane defined by $x\frac{T}{F}=F+y^2$.

\subsubsection{A $D$-Basis for $B$}

Let $p:B\to B/zB=k[x,y]\cong k^{[2]}$ be the standard surjection, and set $\bar{A}=p(A)$ and $\bar{\mathcal{F}}_n=p(\mathcal{F}_n)$. 
\begin{proposition} $\bar{\mathcal{F}}_9=k[x,y]$ and $\bar{\mathcal{F}}_9$ is a free $\bar{A}$-module of rank 10.
\end{proposition}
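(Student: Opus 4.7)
The strategy will be to compute $\bar{\mathcal{F}}_9$ explicitly from the ten-element generating set of $\mathcal{F}_9$ displayed immediately above the proposition, show it contains an obvious spanning set for $k[x,y]$ over $\bar A$, and then verify freeness by counting field degrees.

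First I will apply $p$ (using $p(z)=0$) to obtain $\bar F=-y^2$, $\bar G=x^5-2x^2y^3$, $\bar R=x^3-y^3$, $\bar S=x^2y$. Setting $u=y^2$ and $v=x^5-2x^2y^3$, algebraic independence (e.g.\ by specializing $x=0$) gives $\bar A=k[u,v]\cong k^{[2]}$, and $\bar{\mathcal{F}}_9$ is the $\bar A$-module spanned by
\[
\mathcal G=\{1,\ x^3-y^3,\ x,\ x^4-xy^3,\ x^2,\ x^2y,\ y,\ x^3y,\ xy,\ x^4y\}.
\]
To prove $\bar{\mathcal{F}}_9=k[x,y]$, I will compare $\mathcal G$ with $\mathcal B=\{x^iy^j:0\le i\le 4,\ j\in\{0,1\}\}$. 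Eight elements of $\mathcal B$ appear directly in $\mathcal G$ (noting $x^3y=x\bar S$ and $x^4y=x^2\bar S$), and the remaining two are recovered via $x^3=(x^3-y^3)+u\cdot y$ and $x^4=(x^4-xy^3)+u\cdot xy$, so $\mathcal B\subseteq\bar{\mathcal{F}}_9$. Next I verify that $\bar A\cdot\mathcal B$ is closed under multiplication by $x$ and $y$: closure under $y$ uses $y\cdot(x^iy)=u\cdot x^i$, and the only nontrivial $x$-multiplications are $x\cdot x^4=x^5=v+2u\cdot x^2y$ and $x\cdot x^4y=v\cdot y+2u^2\cdot x^2$, both in $\bar A\cdot\mathcal B$. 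Hence $\bar A\cdot\mathcal B\supseteq\bar A[x,y]=k[x,y]$, giving $\bar{\mathcal{F}}_9\supseteq k[x,y]$; the reverse inclusion is trivial.

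For the rank-$10$ claim, it suffices to show $\mathcal B$ is $\bar A$-linearly independent, which follows once $[k(x,y):k(u,v)]=10$ is established. I will argue $y\notin k(u,v)$ by taking a reduced fraction $y=P/Q$ with $P,Q\in k[u,v]$ coprime and specializing $x=0$, which sends $v\mapsto 0$ and forces $yQ(u,0)=P(u,0)$ in $k[y]$; the parity contradiction in $y$ shows $v\mid P$ and $v\mid Q$, violating coprimality. Thus $[k(u,v,y):k(u,v)]=2$. Over $k(u,v,y)=k(v,y)$, the element $x$ satisfies $X^5-2y^3X^2-v=0$; viewing this polynomial in $k[y,X][v]$ exhibits it as degree $1$ in $v$ with unit leading coefficient, so it is irreducible in $k[y,v,X]$, primitive in $k[y,v][X]$, and therefore irreducible in $k(y,v)[X]$ by Gauss's lemma. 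Hence $[k(x,y):k(v,y)]=5$, the total degree is $10$, and $\mathcal B$—having the correct cardinality and spanning $k(x,y)$ over $k(u,v)$—forms a $k(u,v)$-basis and is $\bar A$-linearly independent. The main obstacle is the irreducibility step; swapping the roles of $v$ and $X$ reduces it to the triviality that a degree-$1$ polynomial with unit leading coefficient over a domain is irreducible.
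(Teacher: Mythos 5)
Your proposal is correct and takes essentially the same route as the paper: compute $\bar{A}=k[y^2,\,x^5-2x^2y^3]$, show $k[x,y]$ is a free $\bar{A}$-module with basis $\mathcal{B}=\{x^iy^j : 0\le i\le 4,\ j\in\{0,1\}\}$, and check that images of elements of $\mathcal{F}_9$ generate this basis over $\bar{A}$ (the paper uses the lifts $R-yF=x^3$ and $x(R-yF)=x^4$, while you rewrite $x^3,x^4$ as $\bar{A}$-combinations of $\bar{R}, x\bar{R}, y, xy$ --- the same observation). The only substantive difference is that you verify in detail, via the degree count $[k(x,y):k(u,v)]=10$ using the parity argument and Gauss's lemma, the rank-$10$ freeness that the paper asserts directly from the integral tower $\bar{A}\subset\bar{A}[y]\subset k[x,y]$ of degrees $2$ and $5$; note also that your sign $\bar{G}=x^5-2x^2y^3$ is the correct one.
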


\begin{proof} We have $\bar{A}=k[y^2,x^5+2x^2y^3]$.
We see that $\bar{A}\subset \bar{A}[y]$ is an integral extension of degree 2, and that $\bar{A}[y]\subset k[x,y]$ is an integral extension of degree 5. Therefore, $k[x,y]$ is a free $\bar{A}$-module of rank 10 with basis:
\[
\mathcal{B}=\{ 1,x,x^2,x^3,x^4,y,xy,x^2y,x^3y,x^4y\}
\]
Observe that $\mathcal{F}_9$ contains the set
\[
\mathcal{C}=\{ 1,x,x^2,R-yF, x(R-yF), y,xy,S,xS,x^2S\}
\]
and that $p(\mathcal{C})=\mathcal{B}$. Therefore, $\bar{\mathcal{F}}_9=k[x,y]$.
\end{proof}

\begin{corollary}\label{dim3-free} $B=\bigoplus_{i\ge 0}\mathcal{F}_9\cdot z^i$
\end{corollary}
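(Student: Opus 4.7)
The plan is to combine the surjectivity of $\mathcal{F}_9 \twoheadrightarrow \bar{\mathcal{F}}_9 = k[x,y]$ furnished by the preceding proposition with two standard facts: the additivity $\deg_D(ab) = \deg_D a + \deg_D b$ on nonzero products, and the numerical gap $\deg_D z = 10 > 9$. Everything then follows by a simple induction on $\deg_D$ together with a ``strip off a factor of $z$'' argument.

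First I would upgrade the surjectivity statement of the proposition to an isomorphism of $A$-modules $p|_{\mathcal{F}_9} \colon \mathcal{F}_9 \xrightarrow{\sim} k[x,y]$. Surjectivity is the proposition. For injectivity, if $f \in \mathcal{F}_9$ lies in $zB$, write $f = zb$; if $b \neq 0$, then $\deg_D f = \deg_D z + \deg_D b \geq 10$, contradicting $f \in \mathcal{F}_9$, so $b = 0$ and $f = 0$.

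For spanning, I would show $B = \sum_{i \geq 0}\mathcal{F}_9 z^i$ by induction on $n = \deg_D b$ for $b \in B \setminus \{0\}$. If $n \leq 9$, then $b \in \mathcal{F}_9$ and the decomposition is immediate. Otherwise, by the surjectivity in the previous step pick $f_0 \in \mathcal{F}_9$ with $p(f_0) = p(b)$, so $b - f_0 = z b_1$ for some $b_1 \in B$. Since $\deg_D f_0 \leq 9 < n$, I get $\deg_D(b - f_0) = n$, and then $\deg_D b_1 = n - 10 < n$ by additivity, so induction decomposes $b_1$ and hence $b$. For directness, suppose $\sum_{i=0}^N f_i z^i = 0$ with $f_i \in \mathcal{F}_9$. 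Applying $p$ gives $p(f_0) = 0$, so $f_0 = 0$ by the injectivity established above; cancelling $z$ in the domain $B$ and repeating yields $f_i = 0$ for every $i$.

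I do not anticipate any real obstacle here, since the genuine work was already done in the preceding proposition; the only input beyond that is the additivity of $\deg_D$ on products and the strict inequality $\deg_D z > 9$, which together let us peel off a factor of $z$ at each step while strictly decreasing $\deg_D$.
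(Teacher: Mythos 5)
Your proof is correct and follows essentially the same route as the paper: both arguments rest on the preceding proposition (which yields $B=\mathcal{F}_9+zB$) together with the degree gap $\deg_D z=10>9$ and additivity of $\deg_D$ on products. The only cosmetic differences are that the paper obtains directness of the sum by citing the $D$-basis lemma of Section 2 and handles spanning with a single degree bound on the tail $z^nb$ after iterating to $B=N+z^nB$ with $n>\deg_D f$, whereas you prove $\mathcal{F}_9\cap zB=0$ directly and induct on $\deg_D$, stripping off one factor of $z$ at a time.
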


\begin{proof} Set $N=\sum_{i\ge 0}\mathcal{F}_9\cdot z^i$. By {\it Lemma\,\ref{lemma2}}, we see that $N=\bigoplus_{i\ge 0}\mathcal{F}_9\cdot z^i$.

Consider the descending chain of submodules:
\[
B\supset N+zB\supset N+z^2B\supset\cdots
\]
By the proposition, we see that $B=N+zB$. Since $N+zN=N$, it follows that $N+z^nB=B$ for every $n\ge 0$. 
Given nonzero $f\in B$, choose an integer $n>\deg_D f$ and write $f=\sum_{0\le i\le n-1}a_iz^i+z^nb$, $a_i\in\mathcal{F}_9$ and $b\in B$. 
If $b\ne 0$, then $\deg_D(a_iz^i)<\deg_D(z^nb)$ for $0\le i\le n-1$, which implies
\[
\deg_Df=\deg_D(z^nb)=10n+\deg b
\]
a contradiction. Therefore, $b=0$ and $f\in N$.  
\end{proof}

It is shown above that $\mathcal{F}_9$ is a free $A$-module which admits a $D$-basis. Thus, the equality in {\it Cor.\,\ref{dim3-free}}, together with {\it Lemma\,\ref{lemma2}(b)} and the $D$-basis of 
$\mathcal{F}_9$, give a $D$-basis for each $\mathcal{F}_n$, $n\ge 0$.  It follows that $B$ is a free $A$-module which admits a $D$-basis.  

\subsubsection{Associated Graded Ring}  The foregoing calculations show the following.
\[
{\rm Gr}_D(B) = A[FGt, F^2Gt^2, F^4G^3t^5, F^5G^3t^6, F^8G^5t^{10}] \subset A[t]=A^{[1]}
\]
In particular, ${\rm Gr}_D(B)$ is finitely generated as a $k$-algebra.
%%%%%%%%%%%%%%%%%%%%%%%%%%%%%%%%%%%%%%%%%%%%%%%%%%%%%%%%%%%%%%%%%

\subsection{Russell Cubic Threefold} (See also \cite{Kaliman.Zaidenberg.99}, Examples 1.5, 3.2.) Let $\C [x,y,z,t]=\C^{[4]}$. The Russell cubic threefold $X\subset\C^4$ is defined by the zero set of the polynomial 
$x+x^2y+z^2+t^3$. $X$ is smooth, contractible and factorial, and $X$ is diffeomorphic to $\R^8$. On the other hand, the function $x$ restricted to $X$ is an invariant of every $\G_a$-action on $X$, which implies that $X$ is not isomorphic to $\C^3$ as a complex algebraic variety; see, for example, \cite{Kaliman.09}. $X$ is an example of an exotic affine space. 

Let the coordinate ring $B=\C [X]$ be given by $B=\C [x,y,z,t]$, where $x+x^2y+z^2+t^3=0$. 
The derivation $D=x^2\frac{\partial}{\partial z}-2z\frac{\partial}{\partial y}$ of $B$ is locally nilpotent, and if $A=\krn D$, then $A=\C [x,t]$. 

\begin{lemma} ${\rm pl}(D)=x^2A$
\end{lemma}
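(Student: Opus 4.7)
The plan is to prove the two inclusions separately; the inclusion $x^2A\subseteq{\rm pl}(D)$ is immediate since $Dz=x^2$, so for any $g\in A$ we have $D(gz)=gx^2$, giving $x^2A\subseteq D\mathcal{F}_1={\rm pl}(D)$. All of the work is in the reverse inclusion. Note that $z$ is a local slice of $D$ with $Dz=x^2\ne 0$ and $D^2z=0$, so by the general local-slice formula, $B_x=B_{x^2}=A_x[z]=A_x^{[1]}$.

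Let $a\in{\rm pl}(D)\setminus\{0\}$, so $a=Db$ for some $b\in\mathcal{F}_1$. In $B_x=A_x[z]$ I write $b=\sum_{i\ge 0}b_iz^i$ with $b_i\in A_x$; since $A_x\subset\krn D$ and $Dz=x^2$, differentiation gives $Db=\sum_{i\ge 1}ib_ix^2z^{i-1}$, and for this to lie in $A\subset A_x$ the coefficient of $z^j$ must vanish for each $j\ge 1$, forcing $b_i=0$ for all $i\ge 2$. Hence $b=b_0+b_1z$ with $b_0,b_1\in A_x$ and $a=b_1x^2$, so the problem reduces to showing $b_1\in A$.

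To show $b_1\in A$, write $b_1=c/x^n$ with $c\in A$ and $n\ge 0$ chosen minimal, so that either $n=0$ or $x\nmid c$ in $A$. Then $x^nb=x^nb_0+cz\in B$, and since $cz\in B$ one gets $x^nb_0\in B\cap A_x$. A direct consequence of (\ref{krn-ideal}) is $B\cap A_x=A$: if $\alpha=c'/x^m\in B\cap A_x$ then $c'\in x^mB\cap A=x^mA$ by (\ref{krn-ideal}), so $\alpha\in A$. Thus $x^nb_0=d$ for some $d\in A$, and $x^nb=d+cz$. Assume for contradiction that $n\ge 1$; then $d+cz=x^nb\in xB$, so reducing modulo $xB$ gives $\bar d+\bar c\,\bar z=0$ in $\overline B:=B/xB=\C[y,z,t]/(z^2+t^3)$.

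The closing step is to force $c\in xA$, which contradicts the choice of $n$ and finishes the proof. The subring $C:=\C[z,t]/(z^2+t^3)\subset\overline B$ is a free $\C[t]$-module of rank $2$ with basis $\{1,\bar z\}$, because $z^2+t^3$ is irreducible and $\bar z^2=-t^3$. The images $\bar d,\bar c$ of $d,c\in A=\C[x,t]$ in $\overline B$ both lie in $\C[t]$, so in the decomposition $C=\C[t]\oplus\bar z\,\C[t]$ the summands $\bar d$ and $\bar c\,\bar z$ are in complementary direct summands; the relation $\bar d+\bar c\,\bar z=0$ thus forces $\bar c=0$, i.e., $c\in xA$, contradicting minimality of $n$. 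Hence $n=0$, $b_1\in A$, and $a=b_1x^2\in x^2A$. The main obstacle is the last step: one needs a clean direct-sum decomposition of $\overline B$ (or of a suitable subring) in which the $A$-linear $z$-coefficient is isolated from the $A$-scalar part, and this is precisely the place where the defining equation of the Russell cubic, specifically the fact that $z^2+t^3$ is irreducible, enters the argument.
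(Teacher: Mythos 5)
Your proof is correct, and it reaches the statement by a somewhat different route than the paper. The paper computes the full module $\mathcal{F}_1$: taking $M=A+Az$, it verifies the stabilization criterion of {\it Thm.\,\ref{main}} (if $xg\in M$ for $g\in\mathcal{F}_1$, then $g\in M$), the key device being the kernel element $(g-zh)Dz-zD(g-zh)$ formed from two elements of $\mathcal{F}_1$, and then reads off ${\rm pl}(D)=D\mathcal{F}_1=x^2A$. You instead prove the plinth statement directly: expand $b\in\mathcal{F}_1$ in $B_x=A_x[z]$, reduce to showing $b_1\in A$, and run a minimal-denominator descent using $B\cap A_x=A$ (a consequence of (\ref{krn-ideal})). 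Both arguments rest on the same structural input --- reduction modulo $xB$ and the linear independence of $1,\bar z$ over $\C[t]$ in $B/xB=\C[y,z,t]/(z^2+t^3)$, i.e.\ $\ker\bar D=\C[t]\oplus\C[t]\bar z$ --- so the difference is in the framing: the paper stays inside its general algorithm and obtains the explicit equality $\mathcal{F}_1=A+Az$, which is what is used immediately afterwards to get $B_1=A[z]\cong\C^{[3]}$, while your version avoids {\it Thm.\,\ref{main}} altogether and still recovers $\mathcal{F}_1=A+Az$ as a corollary (if $g\in\mathcal{F}_1$, then $Dg=x^2h$ with $h\in A$, so $g-hz\in A$). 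One small repair to a justification: the freeness of $\C[z,t]/(z^2+t^3)$ over $\C[t]$ with basis $\{1,\bar z\}$ follows from $z^2+t^3$ being monic of degree two in $z$ (irreducibility only gives that the quotient is a domain), and the independence of $1,\bar z$ persists in all of $\overline B$ because $\overline B$ is free over $\C[y,t]$ with the same basis --- which is exactly what your final step requires.
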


\begin{proof} Let $\mathcal{F}_1$ be the first degree module for $D$, noting that $A+Az\subset\mathcal{F}_1$. 

Let $\bar{B}=B/xB$ and let $\bar{D}$ be the locally nilpotent derivation on $\bar{B}$ induced by $D$. Then:
\[
\bar{B} = \frac{\C [z,t]}{(z^2+t^3)}[y] \quad {\rm and}\quad \krn \bar{D} =  \frac{\C [z,t]}{(z^2+t^3)} = \C [t] \oplus \C [t]\cdot z
\]
If $g\in\mathcal{F}_1$ and $xg\in A+Az$, then $Dg\in xA$. Write $Dg=xP(t) +x^2h$ for some $P\in\C [t]$ and $h\in A$.\footnote{$Dg$ has no constant term, since 0 is a fixed point of the $\G_a$-action.}
Then $g-zh\in A+Az$ and $D(g-zh)=xP(t)$. 
Since $g-zh, z\in\mathcal{F}_1$, we have:
\[
(g-zh)x^2-xP(t)z\in A \quad\Rightarrow\quad (g-zh)x-P(t)z\in A 
\]
Modulo $xB$, this implies:
\[
 -P(t)z\in \bar{A}=\C [t]\subset\krn\bar{D}=\C [t]\oplus \C [t]\cdot z
\quad\Rightarrow\quad P(t)\equiv 0 \quad\Rightarrow\quad P(t) \in xB
\]
Since $\C [x,t]\cong\C^{[2]}$, it follows that $P(t)=0$ and $D(g-zh)=0$. Therefore, $g\in A+Az$. By {\it Thm.\,\ref{main}}, $\mathcal{F}_1=A+Az$, which implies
${\rm pl}(D)=D\mathcal{F}_1=x^2A$. 
\end{proof}

By this lemma, $B_1=A[z]=\C [x,z,t]\cong\C^{[3]}$. In addition, since $\deg_D(y)=2$, $B_2=B_1[y]=B$. We find that $B_2=B_1[x^{-2}I]$, where 
$I=x^2B_1+(x+z^2+t^3)B_1$. 

Let $X_1={\rm Spec}(B_1)\cong\C^3$, and let $Y={\rm Spec}(A)$. The canonical resolution of the quotient morphism $\pi :X\to Y$ is given by:
\[
X=X_2  \xrightarrow{\pi_1} X_1\xrightarrow{\pi_0} X_0=Y
\]
We have that $X_1=Y\times\C^1$, and $\pi_0$ is projection on the first factor. 

Let $W_1=\mathcal{V}(x)\subset X_1$ and $W=\mathcal{V}(x)\subset X$. Then $W_1\cong\C^2$ and $W\cong C\times\C^1$, where $C$ is the cuspidal cubic curve $\mathcal{V}(x,z^2+t^3)$. Since $(B_1)_x=B_x$, $\pi_1 :X\setminus W\to X_1\setminus W_1$ is an isomorphism. Otherwise, $\pi_1(W)=C$. 

%%%%%%%%%%%%%%%%%%%%%%%%%%%%%%%%%%%%%%%%%%%%%%%%%%%%%%%%%%%%%%%%%

\subsection{Winkelmann's Example} In \cite{Winkelmann.90}, Winkelmann gave the first examples of free $\G_a$-actions on affine space which are not translations. We analyze the smallest of these examples, which is in dimension four. 

Let $B=k[x,y,z,u]=k^{[4]}$, and define $F\in B$ by $F=2xz-y^2$. Define $D\in {\rm LND}(B)$ by:
\[
Dx=0\,\, ,\,\, Dy=x\,\, ,\,\, Dz=y\,\, ,\,\, Du=F+1
\]
Let $A=\krn D$, and let $\pi :X\to Y$ be the induced quotient morphism, where $X={\rm Spec}(B)$ and $Y={\rm Spec}(A)$.
Since $xB+yB+(F+1)B=B$, the induced $\G_a$-action on $\A^4$ is fixed-point free.

Let $\mathcal{F}_n$ be the $A$-module $\mathcal{F}_n=\krn D^{n+1}$ and let $B_n=k[\mathcal{F}_n]$, $n\ge 0$. 
Since $x,y,z,u\in\mathcal{F}_2$, we see that $B=B_2$, so the degree resolution of $A$ is given by:
\[
A=B_0\subset B_1\subset B_2=B
\]
Hence, if $X_i={\rm Spec}(B_i)$, then the canonical factorization of $\pi$ is given by:
\[
X=\A^4=X_2  \xrightarrow{\pi_1} X_1\xrightarrow{\pi_0} X_0=Y
\]

\subsubsection{The Plinth Ideal}
Observe the following. 
\begin{enumerate}
\item Since $y$ and $u$ are local slices, $G:=uDy-yDu\in A$. 
\item If $T=yu-2z(F+1)$, then $DT=G$. 
\item Since $u$ and $T$ are local slices, $H:=uDT-TDu\in A$.
\item It is easy to show that $A=k[x,F,G,H]$. The prime relation for this ring is: 
\[
xH=G^2+F(F+1)
\]
\end{enumerate}

\begin{lemma}\label{Wink-lemma} ${\rm pl}(D)=(x,F+1,G)$. 
\end{lemma}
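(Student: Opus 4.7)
The plan is to apply {\it Theorem\,\ref{main}} with $n=1$, choosing $y$ as the local slice (so $f = Dy = x$) and the candidate module $M_0 := A + Ay + Au + AT$. Since $u$ and $T$ are local slices of $D$ with $Du = F+1$ and $DT = G$, we have $M_0 \subseteq \mathcal{F}_1$, and trivially $\mathcal{G}_1(y) = A + Ay \subseteq M_0$. Verifying condition (1) of {\it Theorem\,\ref{main}}---the inclusion $xB \cap M_0 \subseteq xM_0$---will force $M_0 = \mathcal{F}_1$, at which point ${\rm pl}(D) = DM_0 = A\cdot x + A(F+1) + AG = (x, F+1, G)$ is immediate.

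To verify $xB \cap M_0 \subseteq xM_0$, I suppose $xw = a + by + cu + dT$ with $a,b,c,d \in A$ and $w \in B$. Applying $D$ gives $c(F+1) + dG \in xA$. The heart of the argument is a syzygy computation in $\bar A := A/xA$: modulo $x$, the defining relation of $A$ becomes $G^2 + F(F+1)^2 = 0$, so $\bar A \cong k[F,G,H]/(G^2 + F(F+1)^2)$. Lifting to the polynomial UFD $k[F,G,H]$, in which $F+1$ and $G$ are coprime irreducibles, a standard rearrangement yields the parametrization
\[
c = GQ_0 + F(F+1)Q_1 + xc', \qquad d = -(F+1)Q_0 + GQ_1 + xd'
\]
for some $Q_0, Q_1, c', d' \in A$.

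Two identities, each verified by direct expansion from the definitions of $G, T, H$, drive the rest of the calculation:
\[
Gu - (F+1)T = H \qquad\text{and}\qquad F(F+1)u + GT = yH.
\]
Substituting the parametrization of $c, d$ into $cu + dT$ and using these identities, the expression collapses to $cu + dT = Q_0 H + Q_1(yH) + x(c'u + d'T)$. Setting $a'' := a + Q_0 H$, $b'' := b + Q_1 H \in A$ and $w_1 := w - c'u - d'T$, I am reduced to showing that $xw_1 = a'' + b''y$ with $a'', b'' \in A$ implies $w_1 \in M_0$. Reducing modulo $x$, multiplying $\bar a'' + \bar b'' \bar y = 0$ in $\bar B$ by $\bar F + 1$ and using $y(\bar F + 1) = -\bar G$ recovers the same syzygy equation $\bar a''(\bar F+1) - \bar b'' \bar G = 0$ in $\bar A$; applying the same parametrization yields $a'' = GR_0 + F(F+1)R_1 + xa'''$ and $b'' = (F+1)R_0 - GR_1 + xb'''$. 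The dual identities $xu = G + y(F+1)$ and $xT = yG - F(F+1)$ now rewrite $a'' + b''y$ as $x(R_0 u - R_1 T + a''' + b''' y)$, giving $w_1 \in M_0$ and hence $w \in M_0$. The main technical obstacle is establishing the syzygy parametrization for $(F+1, G)$ in $\bar A$; the rest is formal bookkeeping with the four identities above.
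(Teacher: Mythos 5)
Your proposal is correct, but it takes a genuinely different route from the paper's proof. The paper never computes $\mathcal{F}_1$ for this lemma: it notes $(x,F+1,G)\subseteq {\rm pl}(D)$ and $A/(x,F+1,G)=k[H]$, and then kills any residual element $P(H)\in{\rm pl}(D)$ by the Wronskian trick --- if $DL=P(H)$ then $yP(H)-xL\in A$, and reducing modulo $x$, where $\bar A=k[y^2,y(y^2-1)][\bar H]$ and $y\notin k[y^2,y(y^2-1)]$, forces $P=0$. You instead prove the stronger statement $\mathcal{F}_1=A+Ay+Au+AT$ by applying {\it Thm.\,\ref{main}} with $M_0=A+Ay+Au+AT$ and resolving the syzygies of $(\overline{F+1},\bar G)$ over $\bar A\cong k[F,G,H]/\bigl(G^2+F(F+1)^2\bigr)$. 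Your computation checks out: the two generating syzygies $(G,-(F+1))$ and $(F(F+1),G)$ do generate (by lifting to the UFD $k[F,G,H]$, where $F+1$ and $G$ are coprime), and the identities $Gu-(F+1)T=H$, $F(F+1)u+GT=yH$, $xu=G+y(F+1)$, $xT=yG-F(F+1)$ all hold by direct expansion; note that your relation $xH=G^2+F(F+1)^2$ is the correct one (the paper prints $xH=G^2+F(F+1)$, evidently a typo), and that your argument does rely on the presentation of $A$ by this single prime relation, which the paper asserts just before the lemma. The trade-off: the paper's argument is shorter and needs only the embedding $\bar A\subset B/xB$, while yours is more computational but, in the spirit of the paper's own treatment of the $(2,5)$-example, directly produces the generators and relations of $\mathcal{F}_1$, which the paper only records afterwards as a consequence of the lemma.
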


\begin{proof}
Let $\sigma :B\to B/xB$ be the standard surjection, let $\bar{A}=\sigma (A)$ and let $\bar{H}=\sigma (H)$. Then:
\[
\bar{A}=k[y^2,y(y^2-1),\bar{H}] = k[y^2,y(y^2-1)]^{[1]}
\]
Let $J\subset A$ be the ideal $J=(x,F+1,G)$, and let $f\in I_1$ be given. Since $J\subset I_1$ and $A/J=k[H]$, it suffices to assume $f\in k[H]$. 

Write $f=P(H)$ for $P\in k^{[1]}$ and let $L\in B$ be such that $DL=f$. Then:
\[
yDL-LDy = yP(H)-xL\in A \quad \Rightarrow\quad  yP(\bar{H})\in\bar{A}
\]
If $P(\bar{H})\ne 0$, choose $\lambda\in k$ so that $P(\lambda )\ne 0$. Then:
\[
yP(\lambda )\in k[y^2, y(y^2-1)] \quad \Rightarrow\quad  k[y^2, y(y^2-1)] = k[y]
\]
But this is clearly a contradiction. Therefore, $P(\bar{H})=0$, which implies $f=P(H)=0$. Therefore, $J=I_1$. 
\end{proof}

\subsubsection{The mapping $\pi_0$}

Let $W_0\subset Y$ be defined by $W_0={\rm Spec}(A/I_1)$. {\it Lemma\,\ref{Wink-lemma}} implies that $A/I_1 = k[H]\cong k^{[1]}$, so $W_0\cong\A^1$. Let $V=Y\setminus W_0$. Then 
there is an open set $U\subset X_1$ such that $U=V\times\A^1$ and $\pi_0$ is projection on the first factor. 
In particular, let $D_1$ be the restriction of $D$ to $B_1$. Since $y,u,T$ are local slices, we have:
\[
(B_1)_x=A_x[y] \,\, ,\quad (B_1)_{F+1}=A_{F+1}[u]\,\, ,\quad (B_1)_G=A_G[T]
\]
Therefore, $U=U_x\cup U_{F+1}\cup U_T$, where $U_x=\{ x\ne 0\}$, $U_{F+1}=\{F+1\ne 0\}$ and $U_T=\{ T\ne 0\}$.

{\it Lemma\,\ref{Wink-lemma}} further implies that $\mathcal{F}_1=A+Ay+Au+AT$. Relations in this module are given by:
\[
yG-xT=F(F+1)\,\, ,\,\, xu-y(F+1)=G\,\, ,\,\, uG-(F+1)T=H
\]
The ring $B_1$ is given by $B_1=k[x,y,u,F,N]$, where $N=z(F+1)$. The prime relation in this ring is $2xN=(F+y^2)(F+1)$. 

The closed set $W_1=X_1\setminus U$ is the set of fixed points $X_1^{\G_a}$, defined by the ideal:
\[
 I_1B_1=xB_1+(F+1)B_1
 \]
 We find that $W_1\cong\A^3$. Therefore, 
$\pi_0$ restricts to $\pi_0 : W_1\cong\A^3\to W_0\cong\A^1$ and we find that $\pi_0(W_1)=p$, where $p\in Y$ is the point defined by the ideal $(x,F+1,G,H)\subset A$. Therefore, 
$\pi_0^{-1}(W_0\setminus\{ p\})=\emptyset$. 

\subsubsection{The mapping $\pi_1$} Let $W\subset X=\A^4$ be defined by the ideal $xB+(F+1)B$. Then $W=W_+\cup W_-$, where $W_+=\{ x=0,y=1\}\cong\A^2$ and 
$W_-=\{ x=0,y=-1\}\cong\A^2$.
Since $(B_1)_x=B_x$ and $(B_1)_{F+1}=B_{F+1}$, we see that $\pi_1$ maps $X\setminus W$ isomorphically to $X_1\setminus W_1$. 

Using coordinate functions $(y,u,N)$ on $W_1=\A^3$, we find that $\pi_1(W_+)=L_+\cong\A^1$ and $\pi_1(W_-)=L_-\cong\A^1$, where $L_+=\{ y=1,N=0\}$ and $L_-=\{ y=-1,N=0\}$. 

\subsubsection{Summary} The canonical factorization of $\pi$ splits as follows:
\[
X\setminus W\underset{\sim}{\xrightarrow{\pi_1}} X_1\setminus X_1^{\G_a}\cong V\times\A^1 \xrightarrow{\pi_0} V
\]
and:
\[
W\cong\A^2\cup\A^2\xrightarrow{\pi_1} X_1^{\G_a}\cong\A^3\xrightarrow{\pi_0}W_0\cong \A^1\,\, ,\quad \pi_1(W)=\A^1\cup\A^1 \,\, {\rm and}\,\, \pi_0(X_1^{\G_a}) = p
\]
As an affine modification, we find that $B=B_1[(F+1)^{-1}J]$, where $J=(F+1)B_1 + NB_1$. 

%%%%%%%%%%%%%%%%%%%%%%%%%%%%%%%%%%%%%%%%%%%%%%%%%%%%%%%%%%%%%%%%%

\section{A Triangular $R$-Derivation of $R^{[3]}$ With a Slice}

\subsection{The Derivation $\delta$ of $\C^{[4]}$} 
Let $R=\C [x,y]=\C^{[2]}$ and $B=R[z,u]=R^{[2]}$, and define $p,v\in B$ by $p=yu+z^2$ and $v=xz+yp$. 
Define the triangular $R$-derivation of $B$ by:
\[
\delta = v_u\partial_z-v_z\partial_u = y^2\partial_z-(x+2yz)\partial_u
\]
If $A=\krn\delta$, then $A=R[v]$. Let $\{\mathcal{F}_n\}_{n\ge 0}$ be the degree modules for $\delta$. Since $v, z\in\mathcal{F}_1$, we see that $p\in\mathcal{F}_1$. Modulo $y$, we have $p\equiv z^2$ and $v\equiv xz$, meaning that $xp-vz=yq$ for some $q\in\mathcal{F}_1$. We find that $q=xu-zp$. 

\begin{proposition}\label{plinth} For the derivation $\delta$:
\begin{itemize}
\item [{\bf (a)}] $\mathcal{F}_1=A+Az+Ap+Aq$
\item [{\bf (b)}] A complete set of $A$-relations for $\mathcal{F}_1$ is given by:
\[
xz+yp-v=0 \quad {\rm and} \quad vz-xp+yq=0
\]
\item [{\bf (c)}] ${\rm pl}(\delta )=y^2A+xyA+(x^2+yv)A$
\end{itemize}
\end{proposition}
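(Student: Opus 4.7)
My plan is to prove (a) via Theorem \ref{main}, deduce (c) by direct calculation from it, and prove (b) independently by working in the localization $B_y = A_y[z]$.

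For (a), take $r = z$ as local slice, so $f = \delta z = y^2 \in A$, and set $M_0 = A + Az + Ap + Aq$. A preliminary calculation gives $\delta p = -xy$ and, using $y^2 p = y(v-xz) = yv - xyz$, also $\delta q = -(x^2 + yv)$; both lie in $A$, so $p, q \in \mathcal{F}_1$ and $M_0 \subset \mathcal{F}_1$. By Theorem \ref{main}, it now suffices to verify $y^2 B \cap M_0 = y^2 M_0$ to conclude $M_0 = \mathcal{F}_1$; I will in fact establish the stronger statement $yB \cap M_0 = yM_0$, which iterates to the required condition.

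The key step, and the main obstacle, is proving $yB \cap M_0 \subset yM_0$. Suppose $yw = a_0 + a_1 z + a_2 p + a_3 q$ with $w \in B$ and $a_i \in A$. Reducing modulo $y$ in $\bar B = B/yB = \C[x,z,u]$, where $\bar p = z^2$ and $\bar q = xu - z^3$, the coefficient of $u$ forces $x \bar a_3 = 0$, hence $a_3 \in yA$; the identity $yq = xp - vz$ then lets me absorb the $q$-term, reducing the problem to $yw'' = a_0 + a_1 z + a_2 p$ with the additional constraint $\bar a_0 + \bar a_1 z + \bar a_2 z^2 = 0$ in $\C[x,z]$. The essential input is that $\bar A = \C[x, \bar v] \subset \C[x,z]$ with $\bar v = xz$, so the kernel of the surjection $\bar A[Z] \twoheadrightarrow \C[x,z]$ sending $Z \mapsto z$ is the principal ideal $(xZ - \bar v)$. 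Hence $\bar a_0 + \bar a_1 Z + \bar a_2 Z^2 = (xZ - \bar v)(\bar c_0 + \bar c_1 Z)$ for some $\bar c_i \in \bar A$; lifting $\bar c_i$ to $c_i \in A$ and using $v - xz = yp$ and $xp - vz = yq$, I can rewrite the remainder as $y$ times an $A$-linear combination of $1, z, p, q$, proving $w \in M_0$.

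For (b), I work in $B_y = A_y[z]$, where $z$ is transcendental over $A_y$ with $p = (v-xz)/y$ and $q = (xv - x^2 z - yvz)/y^2$. An $A$-linear relation $a_0 + a_1 z + a_2 p + a_3 q = 0$, after clearing denominators and separating by powers of $z$, yields $a_0 y^2 + v(a_2 y + a_3 x) = 0$ and $a_1 y^2 = a_2 xy + a_3(x^2 + yv)$. Pairwise coprimality of $x, y, v$ in the UFD $A = \C[x,y,v]$ forces $v \mid a_0$ and $y \mid a_3$; writing $a_0 = -v c_1$ and $a_3 = y c_2$ and back-substituting gives $a_2 = c_1 y - c_2 x$ and $a_1 = c_1 x + c_2 v$, exhibiting the relation as $c_1 R_1 + c_2 R_2$ where $R_1 = (-v, x, y, 0)$ and $R_2 = (0, v, -x, y)$. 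Part (c) is then immediate from (a): ${\rm pl}(\delta) = \delta \mathcal{F}_1 = A\delta z + A\delta p + A\delta q = y^2 A + xy A + (x^2 + yv) A$.
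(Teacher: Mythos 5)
Your proof is correct, and it follows the paper's overall strategy --- apply Theorem \ref{main} to the candidate module $M=A+Az+Ap+Aq$ and verify the stabilization condition by reducing modulo a prime of $A$ --- but it differs in two genuine ways. First, you check the condition at $y$ (proving $yB\cap M=yM$, which iterates to $y^{2}B\cap M=y^{2}M$ with $y^{2}=\delta z$), working in $B/yB=\C[x,z,u]$ and using that the kernel of $\bar{A}[Z]\to\C[x,z]$ is the principal ideal $(xZ-\bar{v})$; the paper instead checks the condition at $x$, working in $B/xB$ via the subring $\C[y,yp,z]\cong\C^{[3]}$ after rewriting $M=A+\C[x,v]p+Az+\C[x,v]q$. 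Your choice has the advantage of sitting squarely inside the hypotheses of Theorem \ref{main}: $y^{2}=\delta z$ is the image of the local slice $z$ and $\mathcal{G}_1(z)\subset M\subset\mathcal{F}_1$, whereas $x\notin{\rm pl}(\delta)$ (by part (c)), so $x$ is not of the form $\delta r$ and the paper's invocation of the theorem with $f=x$ requires a supplementary remark (for instance, that the images $y^{2},xy,x^{2}+yv$ generate the unit ideal of $A_x$, so $\delta$ has a slice in $M_x$ and $(\mathcal{F}_1)_x=M_x$). Second, for (b) the paper extracts the relations from the same modular computation in one pass, while you give an independent, fully explicit argument in $B_y=A_y[z]$: clearing denominators, comparing coefficients of powers of $z$, and using that $x,y,v$ are pairwise non-associate primes of the UFD $A=\C[x,y,v]$ to show every relation is an $A$-combination of $(-v,x,y,0)$ and $(0,v,-x,y)$. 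This costs a second computation but makes the ``complete set of relations'' claim more transparent than the paper's one-line appeal to ``the same argument''. Part (c) is obtained identically in both treatments, from $\delta z=y^{2}$, $\delta p=-xy$, $\delta q=-(x^{2}+yv)$.
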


\begin{proof} Define $M\subset\mathcal{F}_1$ by $M=A+Az+Ap+Aq$. Since $yp=v-xz$ and $yq=xp-vz$, we have:
\[
M = A + \C [x,v]p+Az+\C [x,v]q
\]
Suppose that $F\in\mathcal{F}_1$ and $xF\in M$. Write:
\[
xF=a_0+a_1p+a_2z+a_3q \quad {\rm where}\quad a_0,a_2\in A\,\, ,\,\, a_1,a_3\in\C [x,v]
\]
Modulo $x$, we have $0=\bar{a}_0+\bar{a}_1p+\bar{a}_2z-\bar{a}_3zp$ where $\bar{a}_0,\bar{a}_2\in\C [y,yp]$ and $\bar{a}_1,\bar{a}_3\in\C [yp]$. Since $\C [y,yp,z]\cong\C^{[3]}$, it follows that $\bar{a}_0+\bar{a}_1p=\bar{a}_2-\bar{a}_3p=0$. Since 
\[
\C [y,yp]+\C [yp]p = \C [y,yp]\oplus \C [yp]p
\]
we conclude that $\bar{a}_i=0$ for each $i$. If $a_i=xb_i$ for $b_i\in B$, then $b_i\in A$, since $A$ is factorially closed. Therefore, $F\in M$. By {\it Thm.\,\ref{main}}, it follows that 
$M=\mathcal{F}_1$. This proves part (a), and the same argument shows part (b). 

Since ${\rm pl}(\delta )=\delta\mathcal{F}_1$, part (c) follows by the observing that:
\[
\delta z=y^2\,\, ,\quad \delta p=-xy\,\, ,\quad \delta q=-(x^2+yv)
\]
\end{proof}

The {\bf V\'en\'ereau polynomial} $f\in A$ is defined by $f=y+xv$. It is well known that $\mathcal{V}(f)\cong\C^3$, 
but it is not known whether $f$ is a variable of $B$. See \cite{Freudenburg.06}. 

Observe that, if $r\in \mathcal{F}_1$ is defined by $r=yz+(v^4-3fv)p-xv^3q$, then $\delta r=f^3$. 

\begin{corollary} $f^2\not\in{\rm pl}(\delta )$
\end{corollary}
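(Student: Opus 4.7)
The plan is to argue by contradiction using the explicit generators $\{y^2,\, xy,\, x^2+yv\}$ of ${\rm pl}(\delta)$ supplied by part (c) of {\it Proposition\,\ref{plinth}}. I would suppose for contradiction that
\[
f^2 \;=\; (y+xv)^2 \;=\; y^2 + 2xyv + x^2v^2 \;=\; \alpha y^2 + \beta xy + \gamma(x^2+yv)
\]
for some $\alpha,\beta,\gamma\in A=\C[x,y,v]$, and then extract incompatible values for the scalar $\gamma(0,0,v)\in\C[v]$ from two specializations of this polynomial identity inside the UFD $\C[x,y,v]$.

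First I would set $y=0$. This kills the $\alpha$ and $\beta$ contributions on the right-hand side and leaves $\gamma(x,0,v)\cdot x^2 = x^2v^2$, so $\gamma(x,0,v)=v^2$ in $\C[x,v]$; in particular $\gamma(0,0,v)=v^2$, a nonzero element of $\C[v]$. Next I would set $x=0$, obtaining $y^2 = \alpha(0,y,v)\,y^2+\gamma(0,y,v)\,yv$. Because $\C[y,v]$ is a domain, dividing by $y$ gives $y\bigl(1-\alpha(0,y,v)\bigr) = \gamma(0,y,v)\,v$; now evaluating at $y=0$ forces $\gamma(0,0,v)\,v=0$ and hence $\gamma(0,0,v)=0$. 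This contradicts the first conclusion.

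There is no substantive obstacle here once the generators of the plinth ideal are in hand: the corollary reduces to an elementary two-step specialization. The only observation needed is that both specializations must produce the same element $\gamma(0,0,v)\in\C[v]$, which is what makes the contradiction go through; the assumption that $f=y+xv$ has the specific shape (degree one in $v$, with the constant term $y$ matching the generator $y^2$) is exactly what causes $\gamma$, rather than $\alpha$ or $\beta$, to carry the obstruction.
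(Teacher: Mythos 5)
Your proof is correct, and it takes a genuinely different route from the paper's. Both arguments start from the generators $y^2,\, xy,\, x^2+yv$ of ${\rm pl}(\delta)$ given in {\it Proposition\,\ref{plinth}}(c) and use that $A=\C[x,y,v]\cong\C^{[3]}$, but the paper then homogenizes: writing $I=(Y^2,XY,X^2+YZ)$ in $\C[X,Y,Z]$, it uses that $I$ is a graded ideal for the standard grading, so $(Y+XZ)^2\in I$ would force the degree-four component $X^2Z^2$ into $I_4$, and this is excluded by a monomial analysis of $I_4$ modulo $\mathcal{W}=Y^2V_2+XYV_2$. You instead fix a putative representation $f^2=\alpha y^2+\beta xy+\gamma(x^2+yv)$ and specialize along the two coordinate hyperplanes: $y=0$ forces $\gamma(0,0,v)=v^2$, while $x=0$ (after cancelling $y$ in the domain $\C[y,v]$) forces $\gamma(0,0,v)=0$, a contradiction. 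Your key observation --- that both specializations constrain the \emph{same} fixed polynomial $\gamma$, so non-uniqueness of the representation is irrelevant --- is exactly what makes the argument legitimate, and it lets you bypass the grading and the dimension count entirely; the paper's graded approach, on the other hand, isolates precisely which homogeneous component of $f^2$ (namely $x^2v^2$) carries the obstruction. Both proofs are elementary and short; yours is arguably the more economical, the paper's the more structural.
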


\begin{proof} 
Let $\C [X,Y,Z]=\C^{[3]}$ and $I=(Y^2,XY,X^2+YZ)$. It must be shown that $(Y+XZ)^2\not\in I$. 

Let $\C [X,Y,Z]=\bigoplus_{i\ge 0}V_i$ be the standard $\Z$-grading of $\C [X,Y,Z]$, where $V_i$ is the vector space of homogeneous forms of degree $i$. Then $I=\bigoplus_{i\ge 0}I_i$ is a graded ideal, where $I_i=I\cap V_i$. 

Let $\mathcal{W}=Y^2\cdot V_2+XY\cdot V_2$. Then:
\[
I_4=\mathcal{W}+(X^2+YZ)\cdot V_2 = \mathcal{W} + \C\cdot (X^2+YZ)Z^2
\]
Therefore, $\dim_{\C} I_4/\mathcal{W} = 1$. Note that the elements $X^2Z^2$ and $YZ^3$ are linearly independent modulo $\mathcal{W}$, since no element of $\mathcal{W}$ has a term supporting $X^2Z^2$ or $YZ^3$. Since $(X^2+YZ)Z^2\in I_4$, it follows that $X^2Z^2\not\in I_4$. 

If $(Y+XZ)^2\in I$, then $X^2Z^2\in I_4$, a contradiction. Therefore, $(Y+XZ)^2\not\in I$. 
\end{proof}

Extend $\delta$ to the triangular derivation $\Delta$ on $B[t]=B^{[1]}$ by $\Delta t=1+f+f^2$. If $s=(1-f)t+r$, then $\Delta s=1$. The kernel of $\Delta$ is the image of the Dixmier map induced by $s$, namely:
\[
\krn\Delta = R[\, z-y^2s\, ,\,\, u+(x+2yz)s-y^3s^2\, ,\,\, f^3t-(1+f+f^2)r\, ]
\]
 According to \cite{Freudenburg.09}, $\krn\Delta$ is a $\C^2$-fibration over $R$. The question is whether it is a trivial fibration.
 
 \begin{question} Do there exist $P,Q\in B[t]$ with $\krn\Delta = R[P,Q]$?
 \end{question}

%%%%%%%%%%%%%%%%%%%%%%%%%%%%%%%%%%%%%%%%%%%%%%%%%%%%%%%%%%%%%%%%%

\section{Conclusion}\label{last}

\subsection{Module Generators}  In {\it Thm.\,\ref{main}}, for the initial module $M_0\subset\mathcal{F}_n$, we can always take
$M_0=\mathcal{G}_n(r)$ for a local slice $r$. If $n\ge 2$ and $\mathcal{F}_1,...,\mathcal{F}_{n-1}$ are known, then a more efficient choice is:
\[
M_0=\sum_{1\le i\le n-1}\mathcal{F}_i\mathcal{F}_{n-i}
\]
We ask the following: If $B=B_N=k[\mathcal{F}_N]$, does it follow that, for each $n\ge 0$,
\[
\mathcal{F}_n=\sum_{e_1+2e_2+\cdots Ne_N=n}\mathcal{F}_1^{e_1}\cdots\mathcal{F}_N^{e_N} \quad ({\rm where}\,\, \mathcal{F}_i^0=A) \, {\rm ?}
\]
More generally, is ${\rm Gr}_D(B)$ finitely generated as a $k$-algebra (given that $B$ is affine)?

\subsection{Related Work} In his thesis, Alhajjar also gives an algorithm for finding degree modules $\mathcal{F}_n$ associated to a locally nilpotent derivation. His algorithm is very different than the one presented herein, employing what he terms a ``twisted embedding technique''. The algorithm we give is modeled after the algorithm of van den Essen for finding generators of the kernel of a locally nilpotent derivation, i.e., the ring of invariants of the corresponding $\G_a$-action. Van den Essen's algorithm, in turn, is a generalization of the technique used by Tan to find generators for the invariants of an irreducible representation of $\G_a$, a technique which was essentially already in use in the Nineteenth Century. 
See \cite{Alhajjar.15,Essen.93,Tan.89} .

For fixed integer $d\ge 0$, 
Alhajjar defines the invariant subring $AL_d(X)=\cap_Dk[\mathcal{F}_d]$, where $D$ ranges over all locally nilpotent derivations of $k[X]$. These rings generalize the well-known Makar-Limanov invariant of $k[X]$. 
Alhajjar's goal is to study affine $k$-varieties $X$ which are semi-rigid, meaning that $X$ admits an essentially unique non-trivial $\G_a$-action. 
In this case, $AL_d(X)=k[\mathcal{F}_d]$ for the non-trivial action, and he studies the sequence of inclusions $AL_d(X)\subset AL_{d+1}(X)$. 

\subsection{Remark} If $K$ is an algebraically closed field of positive characteristic, then any $\G_a$-action on $\A^n_K$ induces a degree function, hence a filtration, on the coordinate ring
$R=K^{[n]}$, where elements of the invariant ring $R^{\G_a}$ have degree zero. See, for example, \cite{Crachiola.Makar-Limanov.05}.
In this way, one obtains a canonical factorization of the quotient morphism for such an action, under the assumption that $R^{\G_a}$ is affine over $K$.\footnote{
It is unknown whether this assumption is necessary, i.e., it is an open question whether $R^{\G_a}$ is always finitely generated when $K$ is of positive characteristic, even in the linear case.}

\subsection{The Freeness Conjecture} 

Let $B=k^{[3]}$.  Given nonzero $D\in {\rm LND}(B)$ with $A=\krn D$, Miyanishi's Theorem asserts that $A\cong k^{[2]}$.
We make the following.
\medskip

\noindent {\bf Conjecture.} {\it Let $B=k^{[3]}$. Given $D\in {\rm LND}(B)$, if $A=\krn D$, then the following equivalent conditions hold.
\begin{itemize}
\item [1.] $B$ is a free $A$-module with basis $\{ Q_i\}_{i\ge 0}$ such that $\deg_DQ_i=i$.
\item [2.] Each degree module $\mathcal{F}_n$ is a free $A$-module with basis $\{ Q_i\}_{0\le i\le n}$ such that $\deg_DQ_i=i$.
\item [3.] Each image ideal $I_n=D^n\mathcal{F}_n\subset A$ is principal. 
\end{itemize}
}
\medskip

In addition to the examples above, evidence for this conjecture includes the following.  
\begin{itemize}
\item [(a)] 
$\mathcal{F}_1=A\oplus Ar$ for a local slice $r$. 
\item [(b)] By combining Miyanishi's Theorem with the Quillen-Suslin Theorem, Daigle has shown that $\mathcal{F}_n$ is a free $A$-module of rank $n+1$ for each $n\ge 0$ (unpublished). 
\end{itemize}

What are the geometric implications of this conjecture? Given $D\in {\rm LND}(B)$, let
\[
A=B_0\subset B_{n_1}\subset\cdots\subset B_{n_r}=B \quad {\rm and}\quad X=X_r  \xrightarrow{\pi_{r-1}} X_{r-1}  \to\cdots  \xrightarrow{\pi_1} X_1\xrightarrow{\pi_0} X_0=Y
\]
be the degree resolution of $B$ and canonical factorization of $\pi$, respectively, induced by $D$. 
If the Freeness Conjecture holds for $D$, then $B_{n_i}=B_{n_{i-1}}[Q_{n_i}]$ for each $i$, $2\le i\le r$, meaning that each mapping $\pi_i :X_{i+1}\to X_i$ is a {\it principal} affine modification 
($1\le i\le r-1$). 
By {\it Lemma\,\ref{principal}}, the defining relation for this extension is of the form $f^mQ_{n_i}=g$ for some $f\in A$, $g\in B_{n_{i-1}}$ and $m\ge 1$. 

Moreover, observe that, in the examples of {\it Sect.\,\ref{examples}}, we saw that $B_{n_{i-1}}=k[x_1,x_2,x_3,x_4]$ for some $x_j\in B$, where $x_1\in A$ and $x_1Q_{n_i}=x_4$.
Consequently $B_{n_i}=[x_1,x_2,x_3,Q_{n_i}]$, meaning that, for these examples, each intermediate threefold $X_i$ is a hypersurface in $\A^4$. 

\bibliography{bibfile}

\begin{thebibliography}{10}

\bibitem{Alhajjar.15}
B.~Alhajjar, \emph{Locally nilpotent derivations of integral domains}, Ph.D.
  thesis, Universite de Bourgogne, 2015.

\bibitem{Crachiola.Makar-Limanov.05}
A.~Crachiola and L.~Makar-Limanov, \emph{On the rigidity of small domains}, J.
  Algebra \textbf{284} (2005), 1--12.

\bibitem{Davis.67}
E.D. Davis, \emph{Ideals of the principal class, {$R$}-sequences and a certain
  monodical transformation}, Pacific J. Math. \textbf{20} (1967), 197--205.

\bibitem{Essen.93}
A.~van~den Essen, \emph{An algorithm to compute the invariant ring of a
  {${\mathbb G}_a$}-action on an affine variety}, J.\ Symbolic Comp.
  \textbf{16} (1993), 551--555.

\bibitem{Flenner.Zaidenberg.05}
H.~Flenner and M.~Zaidenberg, \emph{Locally nilpotent derivations on affine
  surfaces with a {${\mathbb C}^*$}-action}, Osaka J. Math. \textbf{42} (2005),
  931--974.

\bibitem{Freudenburg.06}
G.~Freudenburg, \emph{Algebraic {T}heory of {L}ocally {N}ilpotent
  {D}erivations}, Encyclop{\ae}dia of Mathematical Sciences, vol. 136,
  Springer-Verlag, Berlin, Heidelberg, New York, 2006.

\bibitem{Freudenburg.09}
\bysame, \emph{Derivations of ${R}[{X},{Y},{Z}]$ with a slice}, J. Algebra
  \textbf{322} (2009), 3078--3087.

\bibitem{Kaliman.94}
S.~Kaliman, \emph{Exotic analytic structures and {E}isenman intrinsic
  measures}, Israel Math. J. \textbf{88} (1994), 411--423.

\bibitem{Kaliman.04}
\bysame, \emph{Free {${\mathbb C}^+$}-actions on {${\mathbb C}^3$} are
  translations}, Invent. Math. \textbf{156} (2004), 163--173.

\bibitem{Kaliman.09}
\bysame, \emph{Actions of {${\mathbb C}^{\ast}$} and {${\mathbb C}_+$} on
  affine algebraic varieties}, Proc. Sympos. Pure Math. \textbf{80} (2009),
  629--654.

\bibitem{Kaliman.Zaidenberg.99}
S.~Kaliman and M.~Zaidenberg, \emph{Affine modifications and affine
  hypersurfaces with a very transitive automorphism group}, Transform. Groups
  \textbf{4} (1999), 53--95.

\bibitem{Rentschler.68}
R.~Rentschler, \emph{Op\'erations du groupe additif sur le plan affine}, C.\
  R.\ Acad.\ Sc.\ Paris \textbf{267} (1968), 384--387.

\bibitem{Tan.89}
L.~Tan, \emph{An algorithm for explicit generators of the invariants of the
  basic {${\mathbb G}_a$}{-}actions}, Comm.\ Algebra \textbf{17} (1989),
  565--572.

\bibitem{Winkelmann.90}
J.~Winkelmann, \emph{On free holomorphic {${\mathbb C}$}-actions on {${\mathbb
  C}^n$} and homogeneous {S}tein manifolds}, Math. Ann. \textbf{286} (1990),
  593--612.

\bibitem{Winkelmann.03}
\bysame, \emph{Invariant rings and quasiaffine quotients}, Math. Z.
  \textbf{244} (2003), 163--174.

\bibitem{Zariski.43}
O.~Zariski, \emph{Foundations of a general theory of birational
  correspondences}, Trans. Amer. Math. Soc. \textbf{53} (1943), 497--542.

\end{thebibliography}
\bibliographystyle{amsplain}

%\begin{thebibliography}{10}
%\end{thebibliography}

\bigskip

\noindent \address{Department of Mathematics\\
Western Michigan University\\
Kalamazoo, Michigan 49008}\\
\email{gene.freudenburg@wmich.edu}
\bigskip

\end{document}